\theoremstyle{plain}
\newtheorem{theorem}[subsection]{Theorem}
\newtheorem*{theorem*}{Theorem}
\newtheorem*{question*}{Question}
\newtheorem{lemma}[subsection]{Lemma}
\newtheorem*{lemma*}{Lemma}
\newtheorem{proposition}[subsection]{Proposition}
\newtheorem*{proposition*}{Proposition}
\theoremstyle{definition}
\newtheorem{definition}[subsection]{Definition}
\newtheorem*{definition*}{Definition}
\newtheorem{remark}[subsection]{Remark}
\newtheorem{notation}[subsection]{Notation}
\newtheorem*{example*}{Example}
\newtheorem*{notation*}{Notation}
\newtheoremstyle{myclaim}
  {1ex}
  {1ex}
  {\it}
  {\parindent}
  {\it}
  {.}
  { }
  {}
\theoremstyle{myclaim}
\newtheorem*{claim*}{Claim}
\newtheoremstyle{note}
  {}
  {}
  {}
  {}
  {\normalfont}
  {.}
  {.5em}
  {}
\newtheoremstyle{citing}
  {}
  {}
  {\itshape}
  {}
  {\bfseries}
  {.}
  {.5em}
  {\thmnote{#3}}
\theoremstyle{citing}
\newcommand{\Z}{\ensuremath{\mathbb{Z}}}
\newcommand{\R}{\ensuremath{\mathbb{R}}}
\newcommand{\C}{\ensuremath{\mathbb{C}}}
\newcommand{\T}{\ensuremath{\mathbb{T}}}
\newcommand{\F}{\ensuremath{\mathbb{F}}}
\newcommand{\norm}[1]{\ensuremath{\| #1 \|}}
\renewcommand{\epsilon}{\varepsilon}
\providecommand{\cal}{\ensuremath{\mathcal}}
\DeclareMathOperator{\id}{id}
\newcommand*{\centerfloat}{%
  \parindent \z@
  \leftskip \z@ \@plus 1fil \@minus \textwidth
  \rightskip\leftskip
  \parfillskip \z@skip}
\let\originalleft\left
\let\originalright\right
\renewcommand{\left}{\mathopen{}\mathclose\bgroup\originalleft}
\renewcommand{\right}{\aftergroup\egroup\originalright}
\DeclareSymbolFont{EUEX}{U}{euex}{m}{n}
\DeclareSymbolFont{euexlargesymbols}{U}{euex}{m}{n}
\DeclareMathSymbol{\intop}{\mathop}{euexlargesymbols}{"52}
     \def\int{\intop\nolimits}
\DeclareSymbolFont{euexsymbols}     {U}{euex}{m}{n}
\DeclareMathSymbol{\smallint}{\mathop}{euexsymbols}{"52}
\DeclareMathOperator{\tr}{tr}
\DeclareMathOperator{\Aut}{Aut}
\DeclareMathOperator{\End}{End}
\DeclareMathOperator{\ch}{ch}
\DeclareMathOperator{\GL}{GL}
\DeclareMathOperator{\Td}{Td}
\newcommand{\del}{\ensuremath{\nabla}}
\begin{document}

\title[Quasi-representations of surface groups]{%
  Quasi-representations of surface groups}

\author{Jos\'e R. Carri\'on}
\email{jcarrion@math.purdue.edu}

\author{Marius Dadarlat}
\email{mdd@math.purdue.edu}

\thanks{M.D. was partially supported by NSF grant \#DMS--1101305.}
\thanks{J.R.C. was partially supported by a Purdue Research Foundation
  grant.}

\address{%
  Department of Mathematics,
  Purdue University,
  West Lafayette, IN, 47907,
  United States}

\date{June 15, 2013.}

\begin{abstract}
  By a quasi-representation of a group $G$ we mean an approximately
  multiplicative map of $G$ to the unitary group of a unital
  $C^*$-algebra.  A quasi-representation induces a partially defined
  map at the level $K$-theory.
  
  In the early 90s Exel and Loring associated two invariants to
  almost-commuting pairs of unitary matrices $u$ and $v$: one a
  $K$-theoretic invariant, which may be regarded as the image of the
  Bott element in $K_0(C(\mathbb{T}^2))$ under a map induced by
  quasi-representation of $\mathbb{Z}^2$ in $U(n)$; the other is the
  winding number in $\mathbb{C}\setminus \{0\}$ of the closed path
  $t\mapsto \det(tvu + (1-t)uv)$.  The so-called Exel-Loring formula
  states that these two invariants coincide if $\|uv - vu\|$ is
  sufficiently small.

  A generalization of the Exel-Loring formula for
  quasi-representations of a surface group taking values in $U(n)$ was
  given by the second-named author.  Here we further extend this
  formula for quasi-representations of a surface group taking values
  in the unitary group of a tracial unital $C^*$-algebra.
\end{abstract}

\maketitle

\section{Introduction}

Let $G$ be a discrete countable group.  In \cite{Dadarlat12,
  Dadarlat11a} the second-named author studied the question of how
deformations of the group $G$ (or of the group $C^*$-algebra $C^*(G)$)
into the unitary group of a (unital) $C^*$-algebra $A$ act on the
$K$-theory of the algebras $\ell^1(G)$ and $C^*(G)$.  By a deformation
we mean an almost-multiplicative map, a \emph{quasi-representation},
which we will define precisely in a moment.  Often, matrix-valued
multiplicative maps are inadequate for detecting the $K$-theory of the
aforementioned group algebras.  In fact, if a countable, discrete,
torsion free group $G$ satisfies the Baum-Connes conjecture, then a
unital finite dimensional representation $\pi\colon C^*(G)\to M_r(\C)$
induces the map $r\cdot \iota_*$ on $K_0(C^*(G))$, where $\iota$ is
the trivial representation of $G$ (see
\cite[Proposition~3.2]{Dadarlat11a}).  It turns out that
almost-multiplicative maps detect $K$-theory quite well for large
classes of groups: one can implement any group homomorphism of
$K_0(C^*(G))$ to $\Z$ on large swaths of $K_0(C^*(G))$ using
quasi-representations (see \cite[Theorem~3.3]{Dadarlat11a}).

Knowing that quasi-representations may be used to detect $K$-theory,
we turn to how it is that they act.  An index theorem of Connes,
Gromov and Moscovici in \cite{Connes-Gromov-etal90} is very relevant
to this topic, in the following context. Let $M$ be a closed
Riemannian manifold with fundamental group $G$ and let $D$ be an
elliptic pseudo-differential operator on $M$.  The equivariant index
of $D$ is an element of $K_0(\ell^1(G))$.  Connes, Gromov and
Moscovici showed that the push-forward of the equivariant index of $D$
under a quasi-representation of $G$ coming from parallel transport in
an almost-flat bundle $E$ over $M$ is equal to the index of $D$
twisted by $E$.

At around the same time, Exel and Loring studied two invariants
associated to pairs of almost-commuting scalar unitary matrices $u,
v\in U(r)$.  One is a $K$-theory invariant, which may be regarded as
the push-forward of the Bott element $\beta$ in the $K_0$-group of
$C(\T^2)\cong C^*(\Z^2)$ by a quasi-representation of $\Z^2$ into the
unitary group $U(r)$.  The Exel-Loring formula proved in
\cite{Exel-Loring91} states that this invariant equals the winding
number in $\C\setminus \{0\}$ of the path $ t\mapsto \det\big( (1-t)uv
+ tvu \big)$.  An extension of this formula for almost commuting
unitaries in a $C^*$-algebra of tracial rank one is due to H.~Lin and
plays an important role in the classification theory of amenable
$C^*$-algebras.  In a different direction, the Exel-Loring formula was
generalized in \cite{Dadarlat12} to finite dimensional
quasi-representations of a surface group using a variant of the index
theorem of \cite{Connes-Gromov-etal90}.
Remark~\ref{rem:exel-loring-orig} discusses the Exel-Loring formula in
more detail.

In \cite{Dadarlat12}, the second-named author used the
Mishchenko-Fomenko index theorem to give a new proof and a
generalization of the index theorem of Connes, Gromov and Moscovici
that allows $C^*$-algebra coefficients.  In this paper we use this
generalization to address the question of how a quasi-representation
$\pi$ of a surface group in the unitary group of a tracial
$C^*$-algebra acts at the level of $K$-theory.  We extend the
Exel-Loring formula to a surface group $\Gamma_g$ (with canonical
generators $\alpha_i$, $\beta_i$) and coefficients in a unital
$C^*$-algebra $A$ with a trace $\tau$.  Briefly, writing
$K_0(\ell^1(\Gamma_g)) \cong \Z [1] \oplus \Z \mu[\Sigma_g]$ we have
\begin{displaymath}
  \tau \big( \pi_\sharp(\mu[\Sigma_g]) \big)
  = \frac{1}{2\pi i} \tau\biggl(
  \log\bigg( \prod_{i=1}^g \big[\pi(\alpha_i),\pi(\beta_i)\big] \biggr)
  \bigg),
\end{displaymath}
where $[1]$ is the $K_0$ class of the unit $1\in \ell^1(\Gamma_g)$,
$[\Sigma_g]$ is the fundamental class in $K$-homology of the genus $g$
surface $\Sigma_g$, and $\mu\colon K_0(\Sigma_g)\to K_0(\ell^1(G))$ is
the $\ell^1$-version of the assembly map of Lafforgue.  For a complete
statement see Theorem~\ref{thm:Mg-formula}.  In the proof we make use
of Chern-Weil theory for connections on Hilbert $A$-module bundles as
developed by Schick \cite{Schick05} and the de la Harpe-Skandalis
determinant \cite{Harpe-Skandalis84} to calculate the first Chern
class of an almost-flat Hilbert module $C^*$-bundle associated to a
quasi-representation (Theorem~\ref{thm:chern-class}).

The paper is organized as follows.  In Section~\ref{sec:main-result}
we define quasi-representations and the invariants we are interested
in, and state our main result, Theorem~\ref{thm:Mg-formula}.  The
invariants make use of the Mishchenko line bundle, which we discuss in
Section~\ref{sec:misch-line-bundle}.  The push-forward of this bundle
by a quasi-representation is considered in
Section~\ref{sec:hilb-module-bundles}.  Section~\ref{sec:chern-class}
contains our main technical result, Theorem~\ref{thm:chern-class},
which computes one of our invariants in terms of the de la
Harpe-Skandalis determinant \cite{Harpe-Skandalis84}.  To obtain the
formula given in the Theorem~\ref{thm:Mg-formula}, we must work with
concrete triangulations of oriented surfaces; this is contained in
Section~\ref{sec:oriented-surfaces}.  Assembling these results in
Section~\ref{sec:proof-main-result} yields a proof of
Theorem~\ref{thm:Mg-formula}.

\section{The main result}
\label{sec:main-result}

In this section we state our main result.  It depends on a result in
\cite{Dadarlat12} that we revisit.  Let us provide some notation and
definitions first.

Let $G$ be a discrete countable group and $A$ a unital $C^*$-algebra.

\begin{definition}
  Let $\epsilon > 0$ and let $\mathcal{F}$ be a finite subset of $G$.
  An \emph{$(\mathcal{F}, \epsilon)$-representation} of $G$ in $U(A)$
  is a function $\pi \colon G\to U(A)$ such that for all $s, t\in
  {\cal F}$ we have
  \begin{itemize}
  \item $\pi(1) = 1$,
  \item $\norm{\pi(s^{-1}) - \pi(s)^*} < \epsilon$, and
  \item $\norm{\pi(st) - \pi(s)\pi(t)} < \epsilon$.
  \end{itemize}
  We refer to the third condition by saying that $\pi$ is
  $(\mathcal{F}, \epsilon)$-multiplicative.  Let us note that the
  second condition follows from the other two if we assume that
  $\mathcal{F}$ is symmetric, i.e. $\mathcal{F}=\mathcal{F}^{-1}$.  A
  \emph{quasi-representation} is an $(\mathcal{F},
  \epsilon)$-representation where $\mathcal{F}$ and $\epsilon$ are not
  necessarily specified.
\end{definition}

A quasi-representation $\pi\colon G\to U(A)$ induces a map (also
denoted $\pi$) of the Banach algebra $\ell^1(G)$ to $A$ by $\sum
\lambda_s s \mapsto \sum \lambda_s \pi(s)$.  This map is a unital
linear contraction.  We also write $\pi$ for the extension of $\pi$ to
matrix algebras over $\ell^1(G)$.

\subsection{Pushing-forward via quasi-representations}
\label{subsec:pushing-forward-via}
A group homomorphism $\pi\colon G\to U(A)$ induces a map $\pi_*\colon
K_0(\ell^1(G))\to K_0(A)$ (via its Banach algebra extension).  We
think of a quasi-representation $\pi$ as inducing a partially defined
map $\pi_\sharp$ at the level of $K$-theory, in the following sense.
If $e$ is an idempotent in some matrix algebra over $\ell^1(G)$ such
that $\|\pi(e) - \pi(e)^2\| < 1/4$, then the spectrum of $\pi(e)$ is
disjoint from the line $\{\text{Re}\, z = 1/2\}$.  Writing $\chi$ for
the characteristic function of $\{\operatorname{Re} z > 1/2\}$, it follows
that $\chi(\pi(e))$ is an idempotent and we set
\begin{displaymath}
  \pi_\sharp(e) = \big[\chi(\pi(e))\big] \in K_0(A).
\end{displaymath}
For an element $x$ in $K_0(\ell^1(G))$ we make a choice of idempotents
$e_0$ and $e_1$ in some matrix algebra over $\ell^1(G)$ such that $x =
[e_0] - [e_1]$.  If $\|\pi(e_i) - \pi(e_i)^2\| < 1/4$ for $i\in \{0,
1\}$, write $\pi_\sharp(x) = \pi_\sharp(e_0) - \pi_\sharp(e_1)$.  The
choice of idempotents is largely inconsequential: given two choices of
representatives one finds that if $\pi$ is multiplicative enough, then
both choices yield the same element of $K_0(A)$.

Of course, the more multiplicative $\pi$ is, the more elements of
$K_0(\ell^1(G))$ we can push-forward into $K_0(A)$.

\subsection{An index theorem}
\label{subsec:an-index-theorem}

Fix a closed oriented Riemannian surface $M$ and let $G$ be its
fundamental group.  Fix also a unital $C^*$-algebra $A$ with a tracial
state $\tau$.  Write $K_0(M)$ for $KK(C(M), \C)$.  Because the
assembly map $\mu\colon K_0(M)\to K_0(\ell^1(G))$ is known to be an
isomorphism in this case \cite{Lafforgue02}, we have
\begin{displaymath}
  K_0(\ell^1(G)) \cong \Z[1] \oplus \Z\mu[M]
\end{displaymath}
where $[M]$ is the fundamental class of $M$ in $K_0(M)$
\cite[Lemma~7.9]{Bettaieb-Matthey-etal05} and $[1]$ is the class of
the unit of $\ell^1(G)$.  Since we are interested in
how a quasi-representation of $G$ acts on $K_0(\ell^1(G))$, we would
like to study push-forward of the generator $\mu[M]$ by a
quasi-representation.

\subsubsection{}

Consider the universal cover $\widetilde{M}\to M$ and the diagonal
action of $G$ on $\widetilde{M}\times \ell^1(G)$ giving rise to the
so-called Mishchenko line bundle $\ell$, $\widetilde{M}\times_G
\ell^1(G)\to M$.  We will discuss it in more detail in
Section~\ref{sec:misch-line-bundle}, where we will give a description
of it as the class of a specific idempotent $e$ in some matrix algebra
over $C(M)\otimes \ell^1(G)$.

If $\pi$ is a quasi-representation of $G$ in $U(A)$, then
$\id_{C(M)}\otimes \pi$ is an almost-multiplicative unital linear
contraction on $C(M)\otimes \ell^1(G)$ with values in $C(M)\otimes A$.
Assuming that $\pi$ is sufficiently multiplicative, we may define the
push-forward of the idempotent $e$ by $\id_{C(M)}\otimes \pi$, just as
in Section~\ref{subsec:pushing-forward-via}.  We set
\begin{displaymath}
  \ell_\pi := (\id_{C(M)}\otimes \pi)_\sharp (\ell) :=
  (\id_{C(M)}\otimes \pi)_\sharp (e) \in
  K_0(C(M)\otimes A).
\end{displaymath}
Let $D$ be an elliptic operator on $M^n$ and let $\mu[D]\in
K_0(\ell^1(G))$ be its image under the assembly map.  Let $q_0$ and
$q_1$ be idempotents in some matrix algebra over $\ell^1(G)$ such that
$\mu[D] = [q_0] - [q_1]$ and write $\pi_\sharp( \mu[D] ) :=
\pi_\sharp(q_0) - \pi_\sharp(q_1)$.  By
\cite[Corollary~3.8]{Dadarlat12}, if $\pi \colon G\to A$ is
sufficiently multiplicative, then
\begin{equation}
  \label{eqn:MFi}
  \tau\big(\pi_\sharp(\mu[D])\big)
  = (-1)^{n(n+1)/2} \big\langle (p_!\ch(\sigma(D))\cup
  \Td(TM\otimes \mathbb{C})\cup \ch_\tau(\ell_\pi) , [M] \big\rangle,
\end{equation}
where $p\colon TM\to M$ is the canonical projection, $\ch(\sigma(D))$
is the Chern character of the symbol of $D$, $\Td(T_\C M)$ is the Todd
class of the complexified tangent bundle, and $[M]$ is the fundamental
homology class of $M$.  Set $\alpha=(-1)^{n(n+1)/2}
(p_!ch(\sigma(D))\cup \Td(TM\otimes \mathbb{C})$.  Then
\eqref{eqn:MFi} becomes
\begin{equation*}
  \tau\big(\pi_\sharp(\mu[D])\big)
  = \big\langle \alpha\cup \ch_\tau(\ell_\pi), [M]\big\rangle
  = \big\langle \ch_\tau(\ell_\pi), \alpha \cap [M] \big\rangle,
\end{equation*}
On the other hand, it follows from the Atiyah-Singer index theorem
that the Chern character in homology $\ch \colon K_0(M)\to
H_*(M;\mathbb{Q})$ is given by
\begin{displaymath}
  \ch[D]
  = \big( (-1)^{n(n+1)/2} p_! \ch( \sigma(D) )
    \cup \Td(T_\C M)
    \big) \cap [M]=\alpha \cap [M].  \end{displaymath}
It follows that 
\begin{equation}\label{eq:1}
  \tau\big(\pi_\sharp(\mu[D])\big)
  = \big\langle \ch_\tau(\ell_\pi), \ch[D] \big\rangle
\end{equation}

In the case of surfaces this formula specializes to the following
statement.

\begin{theorem}[cf. {\cite[Corollary~3.8]{Dadarlat12}}]
  \label{thm:index-theorem}
  Let $M$ be a closed oriented Riemannian surface of genus $g$ with
  fundamental group $G$.  Let $q_0$ and $q_1$ be idempotents in some
  matrix algebra over $\ell^1(G)$ such that $\mu[M] = [q_0] - [q_1]$.
  Then there exist a finite subset $\mathcal{G}$ of $G$ and $\omega >
  0$ satisfying the following.

  Let $A$ be a unital $C^*$-algebra with a tracial state $\tau$ and
  let $\pi\colon G\to U(A)$ be a $(\mathcal{G},
  \omega)$-representation.  Write $\pi_\sharp( \mu[M] ) :=
  \pi_\sharp(q_0) - \pi_\sharp(q_1)$.  Then
  \begin{displaymath}
    \tau\big( \pi_\sharp(\mu[M]) \big) =
    \langle \ch_\tau (\ell_\pi), [M] \rangle.
  \end{displaymath}
\end{theorem}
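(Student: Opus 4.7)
The plan is to obtain the statement as a direct specialization of the general index formula~\eqref{eq:1}, which has already been derived in this section from the Mishchenko--Fomenko index theorem of \cite{Dadarlat12}. The only new work is to identify $\ch[D]$ with $[M]\in H_2(M;\Q)$ for a suitably chosen operator $D$ on the two-dimensional surface $M$.

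First, I would select an elliptic operator $D$ on $M$ whose $K$-homology class equals the fundamental class $[M]\in K_0(M)$; on an oriented surface the $\mathrm{spin}^c$ Dirac or Dolbeault operator is convenient. Applying \cite[Corollary~3.8]{Dadarlat12} to this $D$ yields a finite subset $\mathcal{G}_0\subset G$ and an $\omega_0>0$ such that every $(\mathcal{G}_0,\omega_0)$-representation $\pi\colon G\to U(A)$ satisfies
\begin{displaymath}
  \tau\bigl(\pi_\sharp(\mu[M])\bigr)
  = \langle \ch_\tau(\ell_\pi),\,\ch[M]\rangle.
\end{displaymath}
Enlarging $\mathcal{G}_0$ to include the group elements appearing in $q_0$ and $q_1$, and shrinking $\omega_0$ below $1/4$, ensures in addition that $\pi_\sharp(q_0)-\pi_\sharp(q_1)\in K_0(A)$ is defined and independent of the choice of representatives $q_0,q_1$ of $\mu[M]$, using the remarks in Section~\ref{subsec:pushing-forward-via}.

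Second, I would use the low dimension of $M$ to simplify the right-hand side. Both $\ch[M]\in H_\ast(M;\Q)$ and $\ch_\tau(\ell_\pi)\in H^\ast(M;\Q)$ are concentrated in degrees $0$ and $2$. The degree-$2$ component of $\ch[M]$ equals the fundamental homology class $[M]$, while the degree-$0$ component of $\ch_\tau(\ell_\pi)$ equals $\tau(1)=1$, since the Mishchenko line bundle is of $\tau$-rank one and this rank is preserved by the push-forward. The only possibly non-trivial surviving pairing is the $H^2\otimes H_2$ contribution $\langle c_1^\tau(\ell_\pi),[M]\rangle$; the residual $H^0\otimes H_0$ contribution (the index of $D$) is either zero by choice of $D$ or absorbed into the normalization implicit in the generator $\mu[M]$. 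This reduces the displayed identity above to $\tau(\pi_\sharp(\mu[M]))=\langle\ch_\tau(\ell_\pi),[M]\rangle$, as desired.

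The main obstacle, in my view, is not the dimensional reduction (which is essentially bookkeeping) but ensuring that the parameters $(\mathcal{G},\omega)$ coming from \cite[Corollary~3.8]{Dadarlat12} can be made to depend only on the data of $(q_0,q_1)$ representing $\mu[M]$, and that the push-forward $\pi_\sharp(q_0)-\pi_\sharp(q_1)$ really is independent of the chosen representatives once $\pi$ is sufficiently multiplicative. Both points rely on the Mishchenko--Fomenko machinery already invoked and on the continuity properties of the map $\pi_\sharp$ discussed earlier.
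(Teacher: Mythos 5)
The overall strategy — specialize the general index formula~\eqref{eq:1} to an elliptic operator $D$ whose $K$-homology class is $[M]$ — is reasonable, and it is essentially what the paper does, but your execution has a genuine gap. You assert that ``on an oriented surface the $\mathrm{spin}^c$ Dirac or Dolbeault operator is convenient'' as a representative of $[M]$. This is false for $g\neq 1$: the paper recalls (citing \cite[Lemma~7.9]{Bettaieb-Matthey-etal05}) that $[M]=[\bar{\partial}_g]+(g-1)[\iota]$, so $[\bar{\partial}_g]$ differs from $[M]$ by $(g-1)[\iota]$. Concretely, $\ch[\bar{\partial}_g]=[M]+(1-g)[\mathrm{pt}]$, so the degree-$0$ part of $\ch[\bar{\partial}_g]$ is $(1-g)$, which is nonzero for $g\neq 1$. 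Your second paragraph tries to wave this away by saying the degree-$0$ contribution ``is either zero by choice of $D$ or absorbed into the normalization implicit in the generator $\mu[M]$,'' but neither alternative is justified: the first fails for the operators you name, and the second is not an argument. If you had genuinely chosen $D$ with $[D]=[M]$, there would be no degree-$0$ term to dispose of, so the very presence of this hedge signals that the choice of $D$ hasn't been made correctly.

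Two ways to close the gap. One is to do what the paper does: use the decomposition $[M]=[\bar{\partial}_g]+(g-1)[\iota]$, apply Eq.~\eqref{eq:1} (i.e. \cite[Corollary~3.8]{Dadarlat12}) to the honest elliptic operator $\bar{\partial}_g$, and handle the character $[\iota]$ separately — this is not an elliptic operator, so Corollary~3.8 does not apply to it, and the paper instead uses \cite[Corollary~3.5]{Dadarlat12} together with a direct computation showing $\tau(\pi_\sharp(\mu[\iota]))=\tau(f(x_0))=\langle\ch_\tau(\ell_\pi),\ch[\iota]\rangle$. The other is to salvage your single-operator strategy by choosing a twisted Dolbeault operator $D=\bar{\partial}_g\otimes L$ with $\deg L=g-1$; then $\ch[D]=[M]$ exactly (the degree-$0$ term vanishes by Riemann--Roch), and since $\ch$ is rationally injective and $K_0(M)$ is torsion-free, $[D]=[M]$ in $K_0(M)$, so Corollary~3.8 applied to this $D$ gives the statement directly. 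Either way, the specific operator must be pinned down and the degree-$0$ cancellation verified; it cannot be left implicit. Your closing remarks about the independence of $\pi_\sharp(q_0)-\pi_\sharp(q_1)$ from the choice of idempotents, and about uniformity of $(\mathcal{G},\omega)$, are correct in spirit and match the first paragraph of the paper's proof.
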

Here $\ch_\tau\colon K_0\big( C(M)\otimes A \big) \to H^{2}(M, \R)$ is
a Chern character associated to $\tau$ (see
Section~\ref{sec:chern-class}), and $[M]\in H_{2}(M, \R)$ is the
fundamental class of $M$.

\begin{proof}
  Given another pair of idempotents $q_0', q_1'$ in some matrix
  algebra over $\ell^1(G)$ such that $\mu[M] = [q_0'] - [q_1']$, there
  is an $\omega_0 > 0$ such that if $0 < \omega < \omega_0$, then for
  any $(\mathcal{G}, \omega)$-representation $\pi$ we have
  $\pi_\sharp(q_0) - \pi_\sharp(q_1) = \pi_\sharp(q_0') -
  \pi_\sharp(q_1')$.  We are therefore free to prove the theorem for a
  convenient choice of idempotents.
  
  It is known that the fundamental class of $M$ in $K_0(M)$ coincides
  with $[\bar{\partial}_g] + (g-1)[\iota]$ where $\bar{\partial}_g$ is
  the Dolbeault operator on $M$ and $\iota\colon C(M)\to \C$ is a
  character (see \cite[Lemma~7.9]{Bettaieb-Matthey-etal05}).  Let
  $e_0, e_1, f_0, f_1$ be idempotents in some matrix algebra over
  $\ell^1(G)$ such that
  \begin{displaymath}
    \mu[\bar{\partial}_g] = [e_0] - [e_1]\quad
    \text{and}\quad
    \mu[\iota] = [f_0] - [f_1].
  \end{displaymath}
  (This gives an obvious choice of idempotents $q_0'$ and $q_1'$ in
  some matrix algebra over $\ell^1(G)$ so that $\mu[M] = [q_0'] -
  [q_1']$.)  We want to prove that
  \begin{displaymath}
    \tau\big( \pi_\sharp(\mu(z)) \big) =
    \langle \ch_\tau (\ell_\pi), \ch(z) \rangle
  \end{displaymath}
  for $z = [M] \in K_0(M)$.  Because of the additivity of this last
  equation, the fact that $[M] = [\bar{\partial}_g] + (g-1)[\iota]$,
  and Eq.~(\ref{eq:1}), it is enough to prove that
  \begin{equation}
    \label{eq:2}
    \tau\bigl( \pi_\sharp(\mu[\iota] )\bigr)
    =
    \langle \ch_\tau (\ell_\pi), \ch[\iota] \rangle.
  \end{equation}
  By \cite[Corollary 3.5]{Dadarlat12}
  \begin{equation*}
    \label{eq:3}
    \tau\big( \pi_\sharp(\mu[\iota]) \big)
    =
    \tau\big( \langle \ell_\pi, [\iota]\otimes 1_A \rangle \big).
  \end{equation*}
  We can represent $\ell_\pi$ by a projection $f$ in matrices over
  $C(M,A)$.  The definition of the Kasparov product implies that
  \begin{displaymath}
    \langle [\ell_\pi], [\iota]\otimes 1_A\rangle
    = \iota_*[f] = [f(x_0)] \in K_0(A).
  \end{displaymath}

  On the other hand, the definition of $\ch_\tau$ (see
  \cite[Definition~4.1]{Schick05}) implies that $\ch_\tau(f) =
  \tau(f(x_0))$ $+$ a term in $H^2(M, \R)$.  Since $\ch[\iota] = 1\in
  H_0(M, \R)$, we get
  \begin{equation}\label{eq:4}
    \langle \ch_\tau(f), \ch[\iota] \rangle
    = \tau(f(x_0)).
  \end{equation}
\end{proof}

\subsection{Statement of the main result}
We will often write $\Sigma_g$ for the closed oriented surface of
genus $g$ and $\Gamma_g$ for its fundamental group.  It is well known
that $\Gamma_g$ has a standard presentation
\begin{displaymath}
  \Gamma_g= \bigg\langle
  \alpha_1, \beta_1, \dots, \alpha_g, \beta_g \ \bigg|\
  \prod_{i=1}^g [\alpha_i,\beta_i]
  \bigg\rangle,
\end{displaymath}
where we write $[\alpha, \beta]$ for the multiplicative commutator
$\alpha\beta\alpha^{-1}\beta^{-1}$.

Our main result is the following.

\begin{theorem}
  \label{thm:Mg-formula}
  Let $g\geq 1$ be an integer and let $q_0$ and $q_1$ be idempotents
  in some matrix algebra over $\ell^1(\Gamma_g)$ such that
  $\mu[\Sigma_g] = [q_0] - [q_1]\in K_0(\ell^1(\Gamma_g))$.  There
  exists $\epsilon_0 > 0$ and a finite subset ${\cal F}_0$ of
  $\Gamma_g$ such that for every $0< \epsilon < \epsilon_0$ and every
  finite subset $\mathcal{F} \supseteq \mathcal{F}_0$ of $\Gamma_g$
  the following holds.

  If $A$ is a unital $C^*$-algebra with a trace $\tau$ and $\pi\colon
  \Gamma_g\to U(A)$ is an $(\mathcal{F}, \epsilon)$-representation,
  then
  \begin{equation}\label{eq:5}
    \tau \big( \pi_\sharp(\mu[\Sigma_g]) \big) 
    = \frac{1}{2\pi i} \tau\biggl(
    \log\bigg( \prod_{i=1}^g \big[\pi(\alpha_i),\pi(\beta_i)\big] \biggr)
    \bigg),
  \end{equation}
  where $\pi_\sharp(\mu[\Sigma_g]) := \pi_\sharp(q_0) -
  \pi_\sharp(q_1).$
\end{theorem}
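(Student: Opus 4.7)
The plan is to reduce Theorem~\ref{thm:Mg-formula} to the index-theoretic formula of Theorem~\ref{thm:index-theorem} and then compute the right-hand side $\langle \ch_\tau(\ell_\pi), [\Sigma_g]\rangle$ explicitly in terms of the unitaries $\pi(\alpha_i),\pi(\beta_i)\in U(A)$. Since $\Sigma_g$ is two-dimensional, only the degree-two component of $\ch_\tau(\ell_\pi)$ contributes, so the task is really to identify the $\tau$-twisted first Chern class of the push-forward bundle. The identification should produce, up to a factor of $2\pi i$, the trace of the logarithm of $\prod_{i=1}^{g}[\pi(\alpha_i),\pi(\beta_i)]$, and one then concludes by combining with Theorem~\ref{thm:index-theorem}.

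First I would realize $\ell_\pi$ geometrically. The Mishchenko line bundle is represented by an explicit idempotent $e\in M_N(C(\Sigma_g)\otimes\ell^1(\Gamma_g))$ (to be discussed in Section~\ref{sec:misch-line-bundle}), and applying $\id_{C(\Sigma_g)}\otimes \pi$ yields, for $\pi$ sufficiently multiplicative, a genuine projection $f$ over $C(\Sigma_g)\otimes A$ whose class is $\ell_\pi$. This projection determines a finitely generated projective Hilbert $A$-module bundle $E_\pi\to \Sigma_g$ which is almost flat, in the sense that it carries a connection $\nabla$ (in Schick's framework \cite{Schick05}) whose curvature $\Omega$ is small in norm when $\pi$ is close to being multiplicative. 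By Chern-Weil theory for Hilbert module bundles, $\ch_\tau(\ell_\pi)$ is represented in $H^{2}(\Sigma_g,\R)$ by $\frac{1}{2\pi i}\tau(\Omega)$, so the pairing with $[\Sigma_g]$ becomes the integral $\frac{1}{2\pi i}\int_{\Sigma_g}\tau(\Omega)$.

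Next I would evaluate this integral using the standard CW-decomposition of $\Sigma_g$ as a $4g$-gon $P$ with boundary word $\prod_{i=1}^g [\alpha_i,\beta_i]$; this is the content of Section~\ref{sec:oriented-surfaces}. Lift to the universal cover and trivialize $E_\pi$ on the interior of $P$, choosing $\nabla$ to be the trivial connection there. The gluing unitaries across paired edges are then precisely $\pi(\alpha_i)$ and $\pi(\beta_i)$. By Stokes' theorem the integral of $\tau(\Omega)$ over $P$ is a boundary term, and after matching the contributions along paired edges it collapses to a single expression concentrated at the image of the unique vertex. This expression, I expect, is exactly the de la Harpe--Skandalis determinant \cite{Harpe-Skandalis84} of a short path of unitaries from $1$ to the product of commutators $\prod_{i=1}^{g}[\pi(\alpha_i),\pi(\beta_i)]$, and hence equals $\tau(\log(\prod_{i=1}^g[\pi(\alpha_i),\pi(\beta_i)]))$ with the principal branch of $\log$, which is available because the product is close to $1$ for $\pi$ sufficiently multiplicative. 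Packaged cleanly, this is Theorem~\ref{thm:chern-class}.

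The main obstacle will be the middle step: constructing an explicit connection on $E_\pi$ whose curvature integral can be computed polygon-by-polygon and then identifying that curvature integral with the de la Harpe--Skandalis determinant. One must smooth the piecewise-constant transition data along the edges of $P$, keep careful track of orientations so that the boundary contributions combine into the ordered product $\prod_{i=1}^{g}[\pi(\alpha_i),\pi(\beta_i)]$ rather than some conjugate or inverse, and verify independence of all auxiliary choices (connection, triangulation, representative idempotents). Once Theorem~\ref{thm:chern-class} is in hand, the assembly in Section~\ref{sec:proof-main-result} is essentially the substitution of this computation into the formula $\tau(\pi_\sharp(\mu[\Sigma_g]))=\langle \ch_\tau(\ell_\pi),[\Sigma_g]\rangle$ from Theorem~\ref{thm:index-theorem}, yielding \eqref{eq:5}.
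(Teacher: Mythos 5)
Your top-level strategy -- pass through Theorem~\ref{thm:index-theorem}, represent $\ell_\pi$ by an almost-flat Hilbert $A$-module bundle, compute $\ch_\tau(\ell_\pi)$ by Chern-Weil, and identify the curvature integral with a de la Harpe--Skandalis determinant -- is exactly the paper's. The genuine divergence is in the middle step. The paper does not work directly with the one-vertex CW polygon; instead it fixes a genuine triangulation $\Lambda_g$ of $\Sigma_g$ with a dual-cell cover and a subordinate partition of unity, builds the connection from the $\{v_{ij}\}$ data, and in Theorem~\ref{thm:chern-class} obtains $\tau(\pi_\sharp(\mu[\Sigma_g]))$ as a signed sum of de la Harpe--Skandalis determinants $\widetilde{\Delta}_\tau(\xi_\sigma)$, one for each 2-simplex $\sigma$. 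For a generic $(\mathcal{F},\epsilon)$-representation $\pi$ this sum has many nonzero terms. The paper then eliminates all but one term by constructing a second quasi-representation $\pi_0 = \hat\pi\circ s_0$ (Notation~\ref{not:def-of-pi_0}) agreeing with $\pi$ on the generators $\alpha_i,\beta_i$ but arranged, via a careful choice of the lift $s_0$ to the free group and the edge-labels of Lemma~\ref{lem:edge-labels}, to be \emph{exactly} multiplicative on every 2-simplex except one (Lemma~\ref{lem:pi_0-mult-on-triangles}); the remaining term is the de la Harpe--Skandalis determinant of the linear path from $\pi_0(\beta_g)$ to $\bigl(\prod[\pi(\alpha_i),\pi(\beta_i)]\bigr)\pi_0(\beta_g)$. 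Finally one checks $\pi_\sharp$ and $\pi_{0\sharp}$ agree on $\mu[\Sigma_g]$. Your polygon route would, if carried out, avoid the $\pi_0$ device because the curvature is concentrated at the single vertex, which is cleaner conceptually; what it buys in simplicity it costs in rigor, since you then lack the dual-cell/partition-of-unity machinery used to define the connection $\omega_i=\sum_k\chi_kv_{ki}^{-1}dv_{ki}$ and to run Green's theorem cell by cell, and you would need to re-verify (the analogue of Proposition~\ref{prop:E_pi-and-l_pi}) that the bundle built from the polygon-edge gluing unitaries actually represents the pushed-forward Mishchenko class $\ell_\pi = (\id\otimes\pi)_\sharp(e)$, and to track carefully that the boundary contributions assemble into the ordered commutator product with the correct sign. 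So: same skeleton, different and somewhat less detailed flesh; your approach is viable but the $\pi_0$ step in the paper is doing real work you have implicitly pushed into the unproven claim that the polygon gluing data is ``precisely $\pi(\alpha_i)$ and $\pi(\beta_i)$'' and that the boundary terms telescope cleanly.
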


The rest of the paper is devoted to the proof.

\begin{remark}
  \label{rem:exel-loring-orig}
  The case when $g=1$ and $A = M_n(\C)$ recovers the Exel-Loring
  formula of \cite{Exel-Loring91}.  As mentioned in the introduction,
  this formula states that two integer-valued invariants $\kappa(u,v)$
  and $\omega(u,v)$ associated to a pair unitary matrices $u ,v\in
  U(n)$ coincide as long as $\norm{ [u,v] - 1_n } < c$, where $c$ is a
  small constant that is independent $n$.  Exel observed (in
  \cite[Lemma~3.1]{Exel93}) that the ``winding-number'' invariant
  $\omega(u,v)$ equals
  \[
  \frac{1}{2\pi i} \tr\bigl( \log( [u,v] ) \bigr).
  \]
  This corresponds to the right-hand side of \eqref{eq:5}.
  
  The ``$K$-theory invariant'' $\kappa(u,v)$ is an element in the
  $K_0$-group of $M_n(\C)$ and is regarded as an integer after
  identifying this group with $\Z$ (the isomorphism given by the usual
  trace on $M_n(\C)$).  This invariant was first introduced
  by Loring in \cite{Loring88}.  We briefly recall its definition.

  Recall that $K_0( C(\T^2) ) \cong \Z[1] \oplus \Z [\beta]$ where
  $[1]$ is the class of the unit of $C(\T^2)$ and $\beta$ is the Bott
  element.  Given unitaries $U$ and $V$ in a unital $C^*$-algebra
  $B$, one defines a self-adjoint matrix
  \[
  e(U, V) =
  \begin{pmatrix}
    f(V)         & g(V) + h(V)U^*\\
    g(V) + Uh(V) & 1 - f(V)
  \end{pmatrix},
  \]
  where $f$, $g$, and $h$ are certain continuous functions on the
  circle.  These are chosen in such a way that when $U = e^{2\pi i x},
  V = e^{2 \pi i y}\in C(\T^2)$ we have that $e(U, V)$ is idempotent
  and has $K_0$-class $[1] + \beta$ (cf. \cite{Rieffel81, Loring88}).

  We assume that $\norm{ [u,v] - 1_n }$ is small enough so that the
  corresponding matrix $e(u,v)$ is nearly idempotent; in particular
  its spectrum does not contain $1/2$.  Writing $\chi$ for the
  characteristic function of $\{\operatorname{Re} z > 1/2\}$, we have
  that $\chi(e(u,v))$ is a projection in $M_n(\C)$.  Define
  \[
  \kappa(u,v) = \tr\big(\chi( e(u,v) )\big) - n.
  \]
  Subtracting $n$ means cancelling-out the $K$-theoretic contribution
  of $[1]$, leaving only the contribution of the push-forward of
  $\beta$ under a quasi-representation determined by $u$ and $v$.
  (Proposition~\ref{prop:existence-of-quasi} makes this statement
  precise.)  This corresponds to the left-hand side of \eqref{eq:5}.
  
  The formula \eqref{eq:5} also recovers its extension by H.~Lin
  \cite{Lin11} for $C^*$-algebras of tracial rank one.  Lin's strategy
  was a reduction to the finite-dimensional case of
  \cite{Exel-Loring91} using approximation techniques.
\end{remark}

The following proposition says that we may associate
quasi-representations with unitaries that nearly satisfy the group
relation in the standard presentation of $\Gamma_g$ mentioned above.
The proof is in Section~\ref{sec:proof-main-result}.

\begin{proposition}
  \label{prop:existence-of-quasi}
  For every $\epsilon > 0$ and every finite subset $\mathcal{F}$ of
  $\Gamma_g$ there is a $\delta > 0$ such that if $A$ is a unital
  $C^*$-algebra with a trace $\tau$ and $u_1,v_1,...,u_g,v_g$ are
  unitaries in $A$ satisfying
  \begin{displaymath}
    \biggl\| \prod_{i=1}^g [u_i,v_i] - 1 \biggr\| < \delta,
  \end{displaymath}
  then there exists an $({\cal F}, \epsilon)$-representation
  $\pi\colon \Gamma_g\to U(A)$ with $\pi(\alpha_i) = u_i$ and
  $\pi(\beta_i) = v_i$, for all $i\in \{1, \dots, g\}$.
\end{proposition}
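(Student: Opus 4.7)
The plan is to build $\pi$ from the universal property of the free group, collapse it to $\Gamma_g$ by means of a set-theoretic section, and use the fact that $\Gamma_g$ has a single defining relation to control the multiplicative defect.

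First, I would fix once and for all a set-theoretic section $\sigma \colon \Gamma_g \to F_{2g}$ of the quotient map $q \colon F_{2g} \twoheadrightarrow \Gamma_g$, normalized so that $\sigma(1) = 1$; here $F_{2g}$ denotes the free group on generators $a_1, b_1, \dots, a_g, b_g$. Write $R = \prod_{i=1}^g [a_i,b_i] \in F_{2g}$; then $\ker q$ is the normal closure of $R$. Given unitaries $u_1,v_1,\dots,u_g,v_g$ satisfying the hypothesis, the universal property of $F_{2g}$ produces a genuine homomorphism $\widetilde{\pi} \colon F_{2g} \to U(A)$ with $\widetilde{\pi}(a_i) = u_i$, $\widetilde{\pi}(b_i) = v_i$. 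I would then define $\pi \colon \Gamma_g \to U(A)$ by $\pi(s) = \widetilde{\pi}(\sigma(s))$. By construction $\pi(1) = 1$, $\pi(\alpha_i) = u_i$, and $\pi(\beta_i) = v_i$ (assuming $\sigma$ is the identity on generators, which we may arrange).

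Next I would verify the multiplicativity estimate. For $s, t \in \Gamma_g$, the word $w_{s,t} := \sigma(s)\sigma(t)\sigma(st)^{-1}$ lies in $\ker q$, so there is an expression
\begin{equation*}
  w_{s,t} \;=\; \prod_{k=1}^{N(s,t)} h_k \, R^{\varepsilon_k} \, h_k^{-1}
\end{equation*}
with $h_k \in F_{2g}$ and $\varepsilon_k \in \{\pm 1\}$. Since $\widetilde{\pi}$ is a homomorphism and $\widetilde{\pi}(h_k)$ is unitary, applying $\widetilde{\pi}$ gives
\begin{equation*}
  \widetilde{\pi}(w_{s,t}) \;=\; \prod_{k=1}^{N(s,t)} \widetilde{\pi}(h_k)\, \widetilde{\pi}(R)^{\varepsilon_k}\, \widetilde{\pi}(h_k)^{*},
\end{equation*}
and because $\|\widetilde{\pi}(R) - 1\| = \|\prod_i [u_i,v_i] - 1\| < \delta$, each factor is within $\delta$ of $1$ (unitary conjugation is isometric and $\|x^{-1} - 1\| = \|x - 1\|$ for unitaries). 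A telescoping argument then yields $\|\widetilde{\pi}(w_{s,t}) - 1\| \le N(s,t)\, \delta$. Finally, using $\pi(s)\pi(t) = \widetilde{\pi}(w_{s,t}) \cdot \pi(st)$ and the fact that $\pi(st)$ is unitary,
\begin{equation*}
  \|\pi(s)\pi(t) - \pi(st)\| \;=\; \|\widetilde{\pi}(w_{s,t}) - 1\| \;\le\; N(s,t)\, \delta.
\end{equation*}
The inverse condition $\|\pi(s^{-1}) - \pi(s)^*\| < \epsilon$ is handled identically, replacing $w_{s,t}$ by $\sigma(s^{-1})\sigma(s)$.

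Since the hypothesis asks us to handle only the finite set $\mathcal F$, we need the estimate only for the finitely many pairs $(s,t) \in \mathcal F \times \mathcal F$ and the finitely many $s \in \mathcal F$. Setting $N_{\mathcal F} := \max_{s,t \in \mathcal F} N(s,t)$ (finite because the set is finite) and choosing $\delta := \epsilon / (N_{\mathcal F} + 1)$ finishes the proof. The only nontrivial input is that every element of $\ker q$ admits some finite expression as a product of conjugates of $R^{\pm 1}$; this is automatic from the definition of the normal closure and does not require any uniform bound on the number of conjugates, because we are quantifying over a single finite set $\mathcal F$ at a time. That observation is what makes the argument work uniformly over all tracial $C^*$-algebras $A$, and it is essentially the only subtle point to get right.
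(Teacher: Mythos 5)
Your proof is correct and follows essentially the same strategy as the paper's Lemma~\ref{lem:existence-of-quasi-homs}: pull back through a set-theoretic section of $q\colon F_{2g}\to\Gamma_g$ that fixes the generators, express the relevant kernel elements $\sigma(s)\sigma(t)\sigma(st)^{-1}$ as products of conjugates of $R^{\pm 1}$, and bound the multiplicative defect by the number of conjugates times $\delta$, then use finiteness of $\mathcal{F}$ to choose $\delta$. The only (minor) difference is that the paper does not separately treat the inverse condition, which you correctly handle via $\sigma(s^{-1})\sigma(s)\in\ker q$.
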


\begin{example*}
  To revisit a classic example, consider the noncommutative 2-torus
  $A_\theta$, regarded as the universal $C^*$-algebra generated by
  unitaries $u$ and $v$ with $[v,u] = e^{2\pi i \theta}\cdot 1$.  This
  is a tracial unital $C^*$-algebra.  If $\theta$ is small enough, we
  may apply Proposition~\ref{prop:existence-of-quasi} and
  Theorem~\ref{thm:Mg-formula} to obtain
  \begin{displaymath}
    \tau(\pi_\sharp(\beta)) =
    \frac{1}{2\pi i} \tau( \log e^{-2 \pi i \theta} ) = -\theta
  \end{displaymath}
  where $\beta \in K_0(C(\T^2))$ is the Bott element, $\tau$ is a
  unital trace of $A_\theta$, and $\pi\colon \Z^2\to U(A_\theta)$ is a
  quasi-representation obtained from
  Proposition~\ref{prop:existence-of-quasi}.
\end{example*}

\section{The Mishchenko line bundle}
\label{sec:misch-line-bundle}

Recall our setup: $M$ is a closed oriented surface with fundamental
group $G$ and universal cover $p\colon \widetilde{M}\to M$.  In this
section we give a picture of the Mishchenko line bundle that will
enable us to explicitly describe its push-forward by a
quasi-representation.

The Mishchenko line bundle is the bundle $\widetilde{M}\times_G
\ell^1(G)\to M$, obtained from $\widetilde{M}\times \ell^1(G)$ by
passing to the quotient with respect to the diagonal action of $G$.
We write $\ell$ for its class in $K_0(C(M)\otimes \ell^1(G))$.

\subsection{Triangulations and the edge-path group}
\label{subsec:s_ij}
We adapt a construction found in the appendix of
\cite{Phillips-Stone90}.  It is convenient to work with a
triangulation $\Lambda$ of $M$.  Let $\Lambda^{(0)} = \{x_0, \dots,
x_{N-1}\}$ be the 0-skeleton of $\Lambda$ and $\Lambda^{(1)}$ be the
1-skeleton.  To each edge we assign an element of $G$ as follows.  Fix
a root vertex $x_0$ and a maximal (spanning) tree $T$ in $\Lambda$.
Let $\gamma_i$ be the unique path along $T$ from $x_{0}$ to $x_i$, and
for two adjacent vertices $x_i$ and $x_j$ let $x_ix_j$ be the
(directed) edge from $x_i$ to $x_j$.  For two such adjacent vertices,
write $s_{ij}\in G$ for the class of the loop $\gamma_i * x_ix_j *
\gamma_j^{-1}$.

Let $\mathcal{F}$ be the (finite) set $\{s_{ij}\}$.  For example, if
$M = \T^2$ so that $G = \Z^2 = \langle \alpha, \beta : [\alpha, \beta]
= 1\rangle$, we have ${\cal F} = \{1, \alpha^{\pm 1}, \beta^{\pm 1},
(\alpha\beta)^{\pm 1}\}$ for the triangulation and tree pictured in
Figure~\ref{fig:m1-triangulated} (on page
\pageref{fig:m1-triangulated})

\begin{definition}
  For a vertex $x_{i_k}$ in a 2-simplex $\sigma = \langle x_{i_0},
  x_{i_1}, x_{i_2}\rangle$ of $\Lambda$, define the \emph{dual cell
    block} to $x_{i_k}$
  \begin{displaymath}
    U_{i_k}^\sigma :=
    \bigg\{
      \sum_{l = 0}^2 t_l x_{i_l} : t_l\geq 0,
      \sum_{l = 0}^2 t_l = 1,
      \text{ and }
      t_{i_k}\geq t_l\text{ for all } l
    \bigg\}.
  \end{displaymath}
  Define the \emph{dual cell} to the vertex $x_i\in \Lambda^{(0)}$ by
  \begin{displaymath}
    U_i = \cup \{ U_i^\sigma : x_i \in \sigma\}.
  \end{displaymath}
  Let $U_{ij}^\sigma = U_i^\sigma\cap U_j^\sigma$ etc.  (See
  Figure~\ref{fig:dual-cells}.)
\end{definition}

\begin{figure}
  \centering
  \subfloat[]{\includegraphics{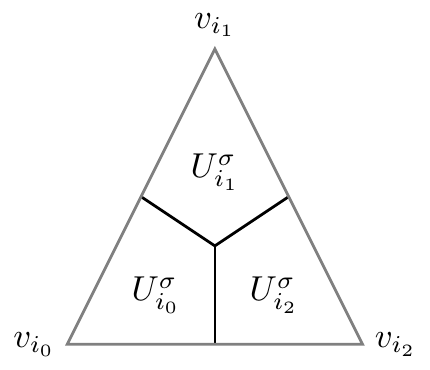}}\hfill
  \subfloat[]{\includegraphics{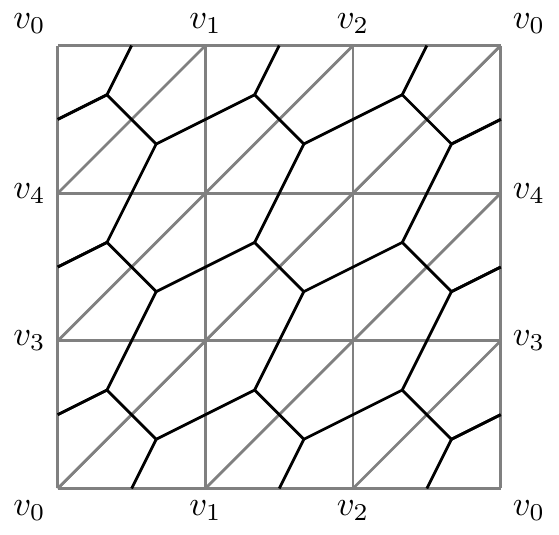}}
  \caption{%
    (\textsc{a}) Dual cell blocks in a simplex $\sigma =
    \langle v_{i_0}, v_{i_1}, v_{i_2}\rangle$.
    (\textsc{b}) A triangulation of $\T^2$ with the dual cell
    structure highlighted.}
  \label{fig:dual-cells}
\end{figure}

Since $p\colon \widetilde{M}\to M$ is a covering space of $M$, we may
fix an open cover of $M$ such that for every element $V$ of this
cover, $p^{-1}(V)$ is a disjoint union of open subsets of
$\widetilde{M}$, each of which is mapped homeomorphically onto $V$ by
$p$.  We require that $\Lambda$ be fine enough so that every dual cell
$U_i$ is contained in some element of this cover.

\begin{lemma}
  \label{lem:mishchenko-line-bundle}
  The Mishchenko line bundle $\widetilde{M}\times_G \ell^1(G)\to M$ is
  isomorphic to the bundle $E$ obtained from the disjoint union
  $\bigsqcup U_i \times \ell^1(G)$ by identifying $(x, a)$ with $(x,
  s_{ij}a)$ whenever $x\in U_i\cap U_j$.
\end{lemma}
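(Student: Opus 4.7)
The plan is to build explicit local trivializations of the Mishchenko bundle $\widetilde{M}\times_G \ell^1(G)$ over each dual cell $U_i$, using the spanning tree $T$ to select distinguished lifts to $\widetilde{M}$, and then to identify the resulting cocycle of transition functions with the cocycle $\{s_{ij}\}$ defining $E$.

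First I would fix a basepoint lift $\tilde{x}_0\in p^{-1}(x_0)\subset \widetilde{M}$ and, for each vertex $x_i$, set $\tilde{x}_i\in p^{-1}(x_i)$ to be the endpoint of the unique lift of $\gamma_i$ starting at $\tilde{x}_0$. By the fineness assumption on $\Lambda$, each dual cell $U_i$ lies in an evenly covered open subset of $M$, so the inclusion $U_i\hookrightarrow M$ lifts uniquely to a continuous section $\tilde\iota_i\colon U_i\to \widetilde{M}$ with $\tilde\iota_i(x_i)=\tilde{x}_i$. I would then declare the local trivialization over $U_i$ to be
\[
\phi_i\colon U_i\times \ell^1(G)\longrightarrow \bigl(\widetilde{M}\times_G \ell^1(G)\bigr)\big|_{U_i},\qquad \phi_i(x,a)=[\tilde\iota_i(x),a],
\]
which is clearly a fibrewise $\ell^1(G)$-linear homeomorphism.

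Next I would compute the transition on $U_i\cap U_j$. A short path-lifting argument applied to the loop $\gamma_i\ast x_ix_j\ast \gamma_j^{-1}$ representing $s_{ij}$ shows that the lift of the edge $x_ix_j$ starting at $\tilde{x}_i$ terminates at $s_{ij}\cdot \tilde{x}_j$. Since $U_i\cap U_j$ is contained in a connected neighbourhood of this edge, uniqueness of lifts yields the pointwise identity $\tilde\iota_i(x)=s_{ij}\cdot\tilde\iota_j(x)$ on $U_i\cap U_j$. Applying the defining relation of the quotient,
\[
\phi_i(x,a)=[\tilde\iota_i(x),a]=[s_{ij}\tilde\iota_j(x),a]=[\tilde\iota_j(x),s_{ij}^{-1}a]=\phi_j(x,s_{ij}^{-1}a),
\]
so $\phi_j^{-1}\circ\phi_i(x,a)=(x,s_{ij}^{-1}a)$. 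These transitions are (up to the chosen conventions) exactly the identifications defining $E$, so the maps $\phi_i$ assemble into the required bundle isomorphism $E\cong \widetilde{M}\times_G \ell^1(G)$.

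The main obstacle is purely bookkeeping: one has to pin down the conventions for the diagonal $G$-action on $\widetilde{M}\times \ell^1(G)$ and for the orientation used in defining $s_{ij}$ so that the exponent of $s_{ij}$ in the transition function matches the one asserted in the statement. The underlying geometric content—that two distinguished lifts of the same point in overlapping dual cells differ by the tree-corrected holonomy along the edge joining their centres—is straightforward.
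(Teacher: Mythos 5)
Your proof is correct and follows essentially the same route as the paper: trivialize the Mishchenko bundle over each dual cell $U_i$ using the distinguished lift determined by the spanning tree, and compute the transition cocycle by a path-lifting argument showing that the two local sections over $U_i \cap U_j$ differ by the deck transformation $s_{ij}$. The only cosmetic difference is that the paper phrases the trivializations via the lifted dual cells $\widetilde{U}_i$ and maps $\Phi_i\colon p^{-1}(U_i)\times_G \ell^1(G)\to U_i\times\ell^1(G)$, whereas you package the same data as sections $\tilde\iota_i\colon U_i\to\widetilde M$; the key step—identifying the lift of $\gamma_i * x_ix_j * \gamma_j^{-1}$ with the deck transformation $s_{ij}$—is the same in both.
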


\begin{proof}
  Lift $x_0$ to a vertex $\widetilde{x}_0$ in $\widetilde{M}$.  By the
  unique path-lifting property, every path $\gamma_i$ lifts (uniquely)
  to a path $\widetilde{\gamma}_i$ from $\widetilde{x}_{0}$ to a lift
  $\widetilde{x}_i$ of $x_i$.  In this way lift $T$ to a tree
  $\widetilde{T}$ in $\widetilde{M}$.  Each $U_i$ also lifts to a dual
  cell to $\widetilde{x}_i$, denoted $\widetilde{U}_i$, which $p$ maps
  homeomorphically onto $U_i$.

  We first describe the cocycle (transition functions) for the
  Mishchenko line bundle.  Identify the fundamental group $G$ of $M$
  with the group of deck transformations of $\widetilde{M}$; see for
  example \cite[Proposition~1.39]{Hatcher02}.  Use this to write
  $p^{-1}(U_i)$ as the disjoint union $\sqcup \{ s\widetilde{U}_i :
  s\in G\}$.  Consider the isomorphism $\Phi_i\colon
  p^{-1}(U_i)\times_G \ell^1(G) \to U_i\times \ell^1(G)$ described by
  the following diagram:
  \begin{displaymath}
    \includegraphics{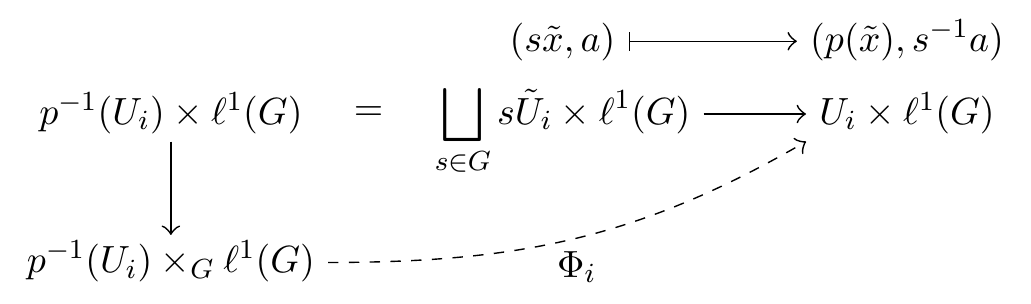}
  \end{displaymath}
  If $U_{ij} := U_i\cap U_j\not= \emptyset$, we obtain the cocycle
  $\phi_{ij}\colon U_{ij} \to \Aut(\ell^1(G))$:
  \begin{gather*}
    U_{ij}\times \ell^1(G) \overset{\Phi_j^{-1}}{\longrightarrow}
    p^{-1}(U_{ij})\times_G \ell^1(G) \overset{\Phi_i}{\longrightarrow}
    U_{ij}\times \ell^1(G)\\
    (x,a) \longmapsto (x,\phi_{ij}(x)a)
  \end{gather*}
  Observe that $\widetilde{M}\times_G \ell^1(G)$ is isomorphic to the
  bundle obtained from the disjoint union $\bigsqcup U_i\times
  \ell^1(G)$ by identifying $(x,a)$ with $(x, \phi_{ij}(x) a)$
  whenever $x\in U_{ij}$.  We only need to prove that $\phi_{ij}$ is
  constantly equal to $s_{ij}$.
  
  Let $x\in U_{ij}$ and let $\widetilde{x}\in \widetilde{U}_j$ be a
  lift of $x$.  Then $\Phi_j\bigl([\widetilde{x}, a]\bigr) = (x,a)$.
  Because $p(\widetilde{x})\in U_{ij}$ there is a (unique) $s\in G$
  such that $\widetilde{x}\in s\widetilde{U}_i\cap \widetilde{U}_{j}
  \not= \emptyset$.  Thus $\Phi_i\bigl([\widetilde{x}, a]\bigr) = (x,
  s^{-1}a)$.  Now, the path $s\widetilde{\gamma}_i *
  s\widetilde{x}_i\, \widetilde{x}_j * \widetilde{\gamma}_j^{-1}$,
  starts at $s\widetilde{x}_{0}$ and ends at $\widetilde{x}_{0}$.  Its
  projection in $M$ is the loop defining $s_{ij}$, so $s^{-1} =
  s_{ij}$.  Thus $\phi_{ij}(x) = s_{ij}$.
\end{proof}

\subsection{The push-forward of the line bundle}
\label{subsec:push-forward-line}
We will need an open cover of $M$, so we dilate the dual cells $U_i$
to obtain one.  Let $0 < \delta < 1/2$ and define $V^\sigma_i$ to be
the $\delta$-neighborhood of $U^\sigma_i$ intersected with $\sigma$.
As before, set $V_i = \bigcup_\sigma V_i^\sigma$.  Let $\{\chi_i\}$ be
a partition of unity subordinate to $\{V_i\}$.

By Lemma~\ref{lem:mishchenko-line-bundle} the class of the Mishchenko
line bundle in $K_0(C(M)\otimes \ell^1(G))$, denoted earlier by
$\ell$, corresponds to the class of the projection
\begin{displaymath}
  e := \sum_{i,j} e_{ij}\otimes \chi_i^{1/2}\chi_j^{1/2}\otimes s_{ij}
  \in M_N(\C)\otimes C(M)\otimes \ell^1(G),
\end{displaymath}
where $\{e_{ij}\}$ are the canonical matrix units of $M_N(\C)$ and $N$
is the number of vertices in $\Lambda$.

We may fix a pair of idempotents $q_0$ and $q_1$ in some matrix
algebra over $\ell^1(G)$ satisfying $[q_0] - [q_1] = \mu[M]\in
K_0(\ell^1(G))$.  Let $\omega > 0$ be given by
Theorem~\ref{thm:index-theorem}.  (We may assume that $\omega < 1/4$.)

Fix $0 < \epsilon < \omega$ and an $(\mathcal{F},
\epsilon)$-representation $\pi\colon G\to U(A)$.  We recall the
following notation from the introduction.

\begin{notation}
  For an $(\mathcal{F}, \epsilon)$-representation $\pi\colon G\to U(A)$
  as above, let
  \begin{displaymath}
      \ell_\pi := (\id_{C(M)}\otimes \pi)_\sharp(e).
  \end{displaymath}
\end{notation}

\section{Hilbert-module bundles and quasi-representations}
\label{sec:hilb-module-bundles}

As mentioned in the introduction, in \cite{Connes-Gromov-etal90} a
quasi-representation (with scalar values) of the fundamental group of
a manifold is associated to an ``almost-flat'' bundle over the
manifold.  In this section we instead define a canonical bundle
$E_\pi$ over $M$ associated with quasi-representation $\pi$.  Its
class in $K_0(C(M)\otimes A)$ will be the class $\ell_\pi$ of the
push-forward of the Mishchenko line bundle by $\pi$.  Our construction
will be explicit enough so that we can use Chern-Weil theory for such
bundles to analyze $\ch_\tau(\ell_\pi)$, see \cite{Schick05}.

Recall that $A$ is a $C^*$-algebra with trace $\tau$.

\begin{definition*}
  Let $X$ be a locally compact Hausdorff space.  A \emph{Hilbert
    $A$-module bundle} $W$ over $X$ is a topological space $W$ with a
  projection $W\to X$ such that the fiber over each point has the
  structure of a Hilbert $A$-module $V$, and with local
  trivializations $W|_U \overset{\sim}{\longrightarrow} U\times V$
  which are fiberwise Hilbert $A$-module isomorphisms.
\end{definition*}

We should point out that the $K_0$-group of the $C^*$-algebra
$C(M)\otimes A$ is isomorphic to the Grothendieck group of isomorphism
classes of finitely generated projective Hilbert $A$-module bundles
over $M$.  We identify the two groups.

\subsection{Constructing bundles}
\label{subsec:constr-almost-flat}

We adapt a construction found in \cite{Phillips-Stone90}.

First we define a family of maps $\{u_{ij}\colon U_{ij} \to \GL(A)\}$
satisfying
\begin{align*}
  u_{ji}(x) &= u_{ij}^{-1}(x),\qquad x\in U_{ij},\\
  u_{ik}(x) &= u_{ij}(x) u_{jk}(x),\qquad x\in U_{ijk}.
\end{align*}
These maps will be then extended to a cocycle defined on the
collection $\{V_{ij}\}$.

Following \cite{Phillips-Stone90} we will find it convenient to fix a
partial order \textbf{o} on the vertices of $\Lambda$ such that the
vertices of each simplex form a totally ordered subset.  We then call
$\Lambda$ a \emph{locally ordered} simplicial complex.  One may always
assume such an order exists by passing to the first barycentric
subdivision of $\Lambda$: if $\hat{\sigma}_1$ and $\hat{\sigma}_2$ are
the barycenters of simplices $\sigma_1$ and $\sigma_2$ of $\Lambda$,
define $\hat{\sigma}_1 < \hat{\sigma}_2$ if $\sigma_1$ is a face of
$\sigma_2$.

Consider a simplex $\sigma = \langle x_{i_0}, x_{i_1}, x_{i_2}
\rangle$ (with vertices written in increasing \textbf{o}-order).
Observe that in this case $U_{i_0}^\sigma\cap U_{i_2}^\sigma = U_{i_0
  i_2}^\sigma$ may be described using a single parameter $t_1$:
\begin{displaymath}
  U_{i_0 i_2}^\sigma = \biggl\{ \sum_{l=0}^2 t_l
  x_{i_l} : t_0 = t_2 = \frac{1 - t_1}{2} : 0\leq t_1\leq 1/3
  \biggr\}.
\end{displaymath}
Define
\begin{align*}
  u_{i_0 i_1}^\sigma &= \text{ the constant function on }
  U_{i_0 i_1}^\sigma \text{ equal to } \pi(s_{i_0 i_1})\\
  u_{i_1 i_2}^\sigma &= \text{ the constant function on }
  U_{i_1 i_2}^\sigma \text{ equal to } \pi(s_{i_1 i_2})\\
  u_{i_0 i_2}^\sigma (t_1) &= (1-3t_1)\pi(s_{i_0 i_2}) + 3t_1 \pi(s_{i_0 i_1})
  \pi(s_{i_1 i_2}),\qquad 0\leq t_1\leq 1/3.
\end{align*}
Define $u_{i_2 i_0}^\sigma$ etc. to be the pointwise inverse of
$u_{i_0 i_2}^\sigma$.  For fixed $i$ and $j$, the maps
$u_{ij}^\sigma\colon U_{ij}^\sigma\to \GL(A)$ define a map
$u_{ij}\colon U_{ij}\to \GL(A)$.  Indeed, if $x_ix_j$ is a common edge
of two simplices $\sigma$ and $\sigma'$, then $U_{ij}^\sigma \cap
U_{ij}^{\sigma'}$ is the barycenter of $\langle x_i, x_j \rangle$,
where by definition both $u_{ij}^\sigma$ and $u_{ij}^{\sigma'}$ take
the value $\pi(s_{ij})$. By construction the family $\{u_{ij}\}$ has
the desired properties.

\subsubsection{}
\label{subsubsec:v-ij}
Recall the sets $V_i$ etc.\ from
Section~\ref{subsec:push-forward-line}.  To define the smooth
transition function $v^\sigma_{i_0 i_2}\colon V^\sigma_{i_0 i_2}\to
\GL(A)$ that will replace $u_{i_0 i_2}^\sigma$, let us assume for
simplicity that the simplex $\sigma$ is the triangle with vertices
$v_{i_0}=(-1/2,0)$, $v_{i_1} = (0,1)$, and $v_{i_2}=(1/2,0)$.  (It may
be helpful to consider Figure~\ref{fig:dual-cells}\textsc{a}.)

Define $v^\sigma_{i_0i_2}$ as follows:
\begin{displaymath}
  v^\sigma_{i_0 i_2}(x,y) =
  \begin{cases}
    \pi(s_{i_0 i_1})\pi(s_{i_1 i_2}),
     &1/3 - \delta \leq y \leq 1/3 + \delta\\
    (1 - \frac{y}{1/3 - \delta})\pi(s_{i_0 i_2}) +\\
    \quad + \frac{y}{1/3 - \delta}\pi(s_{i_0 i_1})\pi(s_{i_1 i_2}),
     & 0 \leq y \leq 1/3 - \delta
  \end{cases}
\end{displaymath}
(so $v^\sigma_{i_0i_2}$ is constant along the horizontal segments in
$V_{i_0 i_2}$).  The remaining two transition functions remain
constant:
\begin{align*}
  v_{i_0 i_1}^\sigma &= \pi(s_{i_0 i_1})\\
  v_{i_1 i_2}^\sigma &= \pi(s_{i_1 i_2})
\end{align*}
Again, for fixed $i$ and $j$ the maps $v_{ij}^\sigma\colon
V_{ij}^\sigma\to \GL(A)$ define a map $v_{ij}\colon V_{ij}\to \GL(A)$.
Since $v^\sigma_{i_0 i_2}$ is constant and equal to $\pi(s_{i_0
  i_1})\pi(s_{i_1 i_2})$ in $V_{i_0}\cap V_{i_2} \cap V_{i_2}$, we
indeed obtain a family $\{v_{ij}\}$ of transition functions.

\begin{definition}
  The Hilbert $A$-module bundle $E_\pi$ is constructed from the
  disjoint union $\bigsqcup V_{i} \times A$ by identifying $(x, a)$
  with $(x, v_{ij}(x)a)$ for $x$ in $V_{ij}$.
\end{definition}

\begin{proposition}
  \label{prop:E_pi-and-l_pi}
  The class of $E_\pi$ in $K_0(C(M)\otimes A)$ coincides with
  $\ell_\pi$, the class of the push-forward of $e$ by
  $\id_{C(M)}\otimes \pi$ (see
  Section~\ref{subsec:push-forward-line}).
\end{proposition}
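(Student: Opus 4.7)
The plan is to construct an explicit idempotent $p_E$ in $M_N(C(M)\otimes A)$ representing $[E_\pi]$ and to show it is norm-close to the element $\tilde e := (\id_{C(M)}\otimes \pi)(e) = \sum_{i,j} e_{ij}\otimes \chi_i^{1/2}\chi_j^{1/2}\pi(s_{ij})$, whose class (after applying $\chi$) is by definition $\ell_\pi$.  Closeness of the two near-idempotents in norm then yields equality of $K$-theory classes via a standard functional-calculus argument.

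First I take
\[
p_E := \sum_{i,j} e_{ij}\otimes \chi_i^{1/2}\,v_{ij}\,\chi_j^{1/2},
\]
with the convention that $\chi_i^{1/2} v_{ij} \chi_j^{1/2}$ is extended by $0$ outside $V_{ij}$ (the supports of the $\chi_i \chi_j$ lie in $V_{ij}$).  Computing $p_E^2$ and collecting terms gives
\[
(p_E^2)_{ik} = \chi_i^{1/2}\Bigl(\sum_j \chi_j\, v_{ij}\,v_{jk}\Bigr)\chi_k^{1/2}.
\]
At any $x$ with $\chi_i(x)\chi_j(x)\chi_k(x)\ne 0$ we have $x\in V_i\cap V_j\cap V_k$, and for $\delta$ small this forces $x_i,x_j,x_k$ to be the vertices of a common $2$-simplex.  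On such a simplex, the construction of the $v$'s in Section~\ref{subsubsec:v-ij} yields the strict cocycle identity $v_{ij}(x)v_{jk}(x)=v_{ik}(x)$, and together with $\sum_j\chi_j=1$ this forces $p_E^2=p_E$.  Thus $p_E$ is a genuine idempotent, and the standard bundle-from-cocycle correspondence identifies $[p_E]=[E_\pi]$ in $K_0(C(M)\otimes A)$.

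Next I estimate $\|p_E - \tilde e\|$.  On the support of $\chi_i^{1/2}\chi_j^{1/2}\subseteq V_{ij}$, the value $v_{ij}(x)$ is either exactly $\pi(s_{ij})$ or a convex combination of $\pi(s_{ij})$ and $\pi(s_{ij'})\pi(s_{j'j})$ for some vertex $x_{j'}$ with $x_i,x_{j'},x_j$ in a common simplex.  In that latter case $s_{ij'}s_{j'j}=s_{ij}$ in $\Gamma$, so almost-multiplicativity of $\pi$ gives
\[
\|v_{ij}(x)-\pi(s_{ij})\| \leq \|\pi(s_{ij'})\pi(s_{j'j})-\pi(s_{ij})\| < \epsilon,
\]
provided $\mathcal F$ contains the (finite) set of relevant $s_{ab}$.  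Summing over the $N^2$ matrix entries yields $\|p_E-\tilde e\|\leq C(\Lambda)\,\epsilon$.  For $\epsilon$ small enough, $\tilde e$ lies within $1/4$ of an idempotent, so $\chi(\tilde e)$ is a well-defined projection, and the linear path of near-idempotents from $\tilde e$ to $p_E$ produces $[\chi(\tilde e)]=[p_E]$ in $K_0$.  Since $[p_E]=[E_\pi]$ and $[\chi(\tilde e)]=\ell_\pi$ by definition, we conclude $[E_\pi]=\ell_\pi$.

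The main technical point is the strict cocycle identity $v_{ij}v_{jk}=v_{ik}$ on triple intersections.  This rests on the precise way the interpolations in Section~\ref{subsubsec:v-ij} are arranged: using the locally ordered structure of $\Lambda$, the ``non-adjacent'' transition function $v_{i_0 i_2}^\sigma$ is forced to equal $\pi(s_{i_0 i_1})\pi(s_{i_1 i_2})$ precisely on the barycentric region where all three dual cells $V_{i_0},V_{i_1},V_{i_2}$ meet.  Without this carefully placed ``flat zone'' the candidate $p_E$ would only be approximately idempotent, and the proof would require a more delicate polar-decomposition step to produce a genuine representative of $[E_\pi]$.
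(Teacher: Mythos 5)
Your argument is correct and matches the paper's proof in all essentials: both take the matrix $p_E = e_\pi := \sum_{i,j} e_{ij}\otimes \chi_i^{1/2}\chi_j^{1/2}\,v_{ij}$ as the representative of $[E_\pi]$, estimate its norm-distance to $(\id_{C(M)}\otimes\pi)(e)$ from $\|v_{ij}(x)-\pi(s_{ij})\|<\epsilon$, and conclude by functional calculus. The only presentational difference is how $[p_E]=[E_\pi]$ is justified: you verify exact idempotence of $p_E$ directly from the strict cocycle identity $v_{ij}v_{jk}=v_{ik}$ on triple intersections (correctly pinpointing the ``flat zone'' built into Section~\ref{subsubsec:v-ij} as what makes this exact), whereas the paper instead constructs an isometric embedding $\theta\colon E_\pi\to M\times A^N$ and identifies $e_\pi(x)$ as the idempotent onto $\theta(E_\pi|_x)$.
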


\begin{proof}
  The bundle $E_\pi$ is a quotient of $\bigsqcup V_i \times A$ and
  from its definition it is clear that for each $i$ the quotient map
  is injective on $V_i\times A$.  The restriction of the quotient map
  to $V_i\times A$ has an inverse, call it $\psi_i$, and $\psi_i$ is a
  trivialization of $E_\pi|_{V_i}$.  Recalling that $N$ is the number
  of vertices in $\Lambda$ (which is the same as the number of sets
  $V_i$ in the cover), we define an isometric embedding
  \begin{gather*}
    \theta\colon E_\pi \to M\times A^N\\
    [x,a] \mapsto \bigl( \chi_i^{1/2}(x)\psi_i([x,a])
    \bigr)_{i=0}^{N-1}.
  \end{gather*}
  Let $e_\pi\colon M\to M_N(A)$ be the function
  \begin{displaymath}
    x\mapsto \sum_{i,j} e_{ij}\otimes \chi_i^{1/2}(x)\chi_j^{1/2}(x)
    v_{ij}(x).
  \end{displaymath}
  Because $\psi_i\psi_j^{-1}(x,a) = (x, v_{ij}(x)a)$ for $x\in
  V_{ij}$, it is easy to check that $e_\pi(x)$ is the matrix
  representing the orthogonal projection of $A^N$ onto
  $\theta(E_\pi|_x)$.  In this way we see that $[E_\pi] = [e_\pi]
  \in K_0(C(M)\otimes A)$.

  Since $\mathcal{F} = \{s_{ij}\}$ and $\pi$ is an $(\mathcal{F},
  \epsilon)$-representation, it follows immediately that the
  transition functions $v_{ij}$ satisfy $\norm{v_{ij}(x) -
    \pi(s_{ij})} < \epsilon$ for all $x\in V_{ij}$.  Thus
  \begin{displaymath}
    \norm{e_\pi - (1\otimes \pi)(e)} = \biggl\| e_\pi -
    \sum_{i,j} e_{ij}\otimes \chi_i^{1/2}\chi_j^{1/2} \pi(s_{ij})
    \biggr\| < \epsilon
  \end{displaymath}
  as well.  Recall that $\ell_\pi$ is obtained by perturbing
  $(1\otimes \pi)(e)$ to a projection using functional calculus and
  then taking its $K_0$-class (see
  Section~\ref{subsec:pushing-forward-via}).  The previous estimate
  shows that this class must be $[e_\pi]$.
\end{proof}

\begin{remark}
  The previous proposition shows that the class $[E_\pi]$ is
  independent of the order \textbf{o} on the vertices of $\Lambda_0$.
\end{remark}

\subsection{Connections arising from transition functions}
\label{subsec:conn-aris-from}
We now define a canonical connection on $E_\pi$ associated with the
family $\{v_{ij}\}$ of transition functions.  This connection will be
used in the proof of Theorem~\ref{thm:chern-class}.

\subsubsection{}
\label{subsubsec:onnection-forms-def}
The smooth sections $\Gamma(E_\pi)$ of $E_\pi$ may be identified with
\begin{displaymath}
  \bigl\{ (s_i) \in \bigoplus_i \Omega^0(V_i, E_\pi) : s_j =
  v_{ji}s_i\text{ on }V_{ij} \bigr\}.
\end{displaymath}
Let $\del_i\colon \Omega^0(V_i, A) \to \Omega^1(V_i, A)$
be given by
\begin{displaymath}
  \del_i(s) = d s +
  \omega_i s\qquad \forall s\in \Omega^0(V_i, A),
\end{displaymath}
where
\begin{displaymath}
  \omega_i = \sum_k \chi_k v_{ki}^{-1} d
  v_{ki}.
\end{displaymath}
Notice that $v_{ki}\in \Omega^0(V_{ik}, \GL(A))$ and so $\omega_i$ may
be regarded as an $A$-valued 1-form on $V_i$, which can be multiplied
fiberwise by the values of the section $s$.

We define a connection $\del$ on $E_\pi$ by
\begin{displaymath}
  \del (s_i) = (\del_i s_i).
\end{displaymath}
That $\del$ takes values in $\Omega^1(M,E_\pi)$ follows from a
straightforward computation verifying
\begin{displaymath}
  \del_j s_j = v_{ji} \del_i s_i.
\end{displaymath}
It is just as straightforward to verify that $\del$ is $A$-linear and
satisfies the Leibniz rule.

\subsubsection{}
Define $\Omega_i = d\omega_i + \omega_i \wedge \omega_i \in
\Omega^2(V_i, A)$.  One checks that $\Omega_i = v_{ji}^{-1} \Omega_j
v_{ji}$ and so $(\Omega_i)$ defines an element $\Omega$ of
$\Omega^2(M, \End_A(E_\pi))$.  This is nothing but the curvature of
$\del$ (see \cite[Proposition~3.8]{Schick05}).

\section{The Chern character}
\label{sec:chern-class}

In this section we prove our main technical result,
Theorem~\ref{thm:Mg-formula}.  It computes the trace of the
push-forward of $\mu[M]$ in terms of the de la Harpe-Skandalis
determinant by using that the cocycle conditions almost hold for the
elements $\pi(s_{ij})$,

\subsection{The de la Harpe-Skandalis determinant}
The de la Harpe-Skandalis determinant \cite{Harpe-Skandalis84} appears
in our formula below.  Let us recall the definition.  Write
$\GL_\infty(A)$ for the (algebraic) inductive limit of
$(\GL_n(A))_{n\geq1}$ with standard inclusions.  For a piecewise
smooth path $\xi\colon [t_1, t_2] \to \GL_\infty(A)$, define
\begin{align*}
  \widetilde{\Delta}_\tau(\xi) &= \frac{1}{2\pi i} \, \tau
  \biggl(%
    \,\int_{t_1}^{t_2} \xi'(t) \xi(t)^{-1}
    d t
  \biggr)
  = \frac{1}{2\pi i} \int_{t_1}^{t_2} \tau ( \xi'(t)
  \xi(t)^{-1}) d t.
\end{align*}

We will make use of some of the properties of
$\widetilde{\Delta}_\tau$ stated below.

\begin{lemma}[cf. Lemme~1 of \cite{Harpe-Skandalis84}]
  \label{lem:de-la-harpe}
  \mbox{}
  \begin{enumerate}
  \item Let $\xi_1, \xi_2\colon [t_1, t_2]\to \GL^0_\infty(A)$ be two
    paths and $\xi$ be their pointwise product.  Then
    $\widetilde{\Delta}_\tau(\xi) = \widetilde{\Delta}_\tau(\xi_1) +
    \widetilde{\Delta}_\tau(\xi_2)$.
  \item Let $\xi\colon [t_1, t_2]\to \GL_\infty^0(A)$ be a path with
    $\norm{\xi(t) - 1} < 1$ for all $t$.  Then
    \begin{displaymath}
      2\pi i \cdot\widetilde{\Delta}_\tau(\xi) =
      \tau \bigl(\log \xi(t_2)\bigr) - \tau
      \bigl(\log\xi(t_1)\bigr).
    \end{displaymath}
  \item The integral $\widetilde{\Delta}_\tau(\xi)$ is left invariant
    under a fixed-end-point homotopy of $\xi$.
  \end{enumerate}
\end{lemma}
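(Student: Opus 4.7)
The plan is to verify the three claims by direct computation, leveraging the trace property $\tau(ab)=\tau(ba)$ at every step.

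For~(1), the product rule yields
\[
\xi'(t)\xi(t)^{-1} = \xi_1'(t)\xi_1(t)^{-1} + \xi_1(t)\,\xi_2'(t)\xi_2(t)^{-1}\,\xi_1(t)^{-1},
\]
and traciality converts the second summand into $\xi_2'(t)\xi_2(t)^{-1}$ once $\tau$ is applied. Integrating pointwise in $t$ over $[t_1,t_2]$ gives additivity.

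For~(2), the hypothesis $\norm{\xi(t)-1}<1$ places $\xi(t)$ in the domain of the principal branch of the logarithm, so $X(t):=\log\xi(t)$ is a smooth $A$-valued curve. The standard derivative-of-exponential formula expresses $\xi(t)^{-1}\xi'(t)$ as $X'(t)$ plus a convergent series of iterated commutators $[X(t),\cdot\,]$ applied to $X'(t)$. Every commutator term vanishes under $\tau$, so
\[
\tau\bigl(\xi(t)^{-1}\xi'(t)\bigr) = \tau\bigl(X'(t)\bigr) = \frac{d}{dt}\tau\bigl(\log\xi(t)\bigr).
\]
Using traciality once more to swap $\xi^{-1}\xi'$ for $\xi'\xi^{-1}$ and integrating from $t_1$ to $t_2$ yields the formula.

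For~(3), I would run a Stokes-type argument on the Banach manifold $\GL_\infty^0(A)$. The right Maurer--Cartan $1$-form $\alpha = d\xi\cdot\xi^{-1}$ satisfies the structure equation $d\alpha = -\alpha\wedge\alpha$, and hence the scalar $1$-form $\omega := \tau\circ\alpha$ satisfies $d\omega = -\tau(\alpha\wedge\alpha)$. Evaluated on a pair of tangent vectors the right-hand side is $\tau$ of a commutator, which vanishes; thus $\omega$ is closed. For a fixed-endpoint piecewise smooth homotopy $H\colon [0,1]\times[t_1,t_2]\to \GL_\infty^0(A)$ from $\xi_0$ to $\xi_1$, Stokes' theorem applied to $H^*\omega$ on the rectangle leaves only the two horizontal edges (the vertical edges project to constant paths, on which $H^*\omega$ vanishes), giving $\int_{\xi_1}\omega = \int_{\xi_0}\omega$.

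The main obstacle is making the derivative-of-exponential series in step~(2) rigorous in the Banach-algebra setting and justifying the termwise application of $\tau$. I would handle this by restricting attention to a small neighborhood of $1$ on which all the relevant series converge absolutely in norm, so that continuity of $\tau$ permits passing the trace through the sum; an analogous care is needed in~(3) to treat $\alpha$ as a genuine smooth form on the Banach manifold $\GL_\infty^0(A)$, but this is routine since $\xi\mapsto \xi^{-1}$ is smooth on the open set of invertibles.
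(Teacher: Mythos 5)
The paper does not prove this lemma itself; it is cited verbatim from de la Harpe--Skandalis (their Lemme~1), so there is no ``paper's proof'' to compare against directly. Judged on its own, your proof is correct and complete. Parts~(1) and~(2) follow the standard route: the Leibniz rule plus traciality gives additivity, and the derivative-of-exponential expansion $\xi^{-1}\xi' = X' + \text{(iterated commutators in $X$ applied to $X'$)}$ collapses under $\tau$ to $\tau(X') = \tfrac{d}{dt}\tau(\log\xi)$, whence the fundamental theorem of calculus gives~(2). For~(3) you take a genuinely different route from de la Harpe--Skandalis, who deduce homotopy invariance from~(1) and~(2) by a local argument: a small perturbation $\eta$ of $\xi$ satisfies $\|\eta\xi^{-1}-1\|<1$, so $\widetilde{\Delta}_\tau(\eta) - \widetilde{\Delta}_\tau(\xi) = \widetilde{\Delta}_\tau(\eta\xi^{-1})$ is computed by the logarithm formula and vanishes when the endpoints agree; compactness of $[0,1]\times[t_1,t_2]$ then finishes. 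Your Stokes-type argument --- observing that $\omega = \tau\circ(d\xi\cdot\xi^{-1})$ is a closed scalar $1$-form because $\tau$ annihilates commutators, then pulling back by the homotopy --- is equally valid (and arguably cleaner conceptually), since one only ever integrates over a finite-dimensional parameter domain. Two small remarks: the right Maurer--Cartan form satisfies $d\alpha = \alpha\wedge\alpha$, not $-\alpha\wedge\alpha$, but the sign is immaterial since either way $\tau(\alpha\wedge\alpha)$ is a trace of commutators and vanishes; and one should stipulate that the homotopy is piecewise smooth (or invoke a smoothing argument) before applying Stokes, which is the same regularity de la Harpe--Skandalis implicitly require.
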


\subsection{The Chern character on $K_0(C(M)\otimes A)$}
Assume $\tau$ is a trace on $A$.  Then $\tau$ induces a map on
$\Omega^2(V_i, \End_A(E_\pi|_{V_i}))$ and by the trace property
$\tau(\Omega_i) = \tau(\Omega_j)$ on $V_{ij}$.  We obtain in this way
a globally defined form $\tau(\Omega) \in \Omega^2(M, \C)$.

Since the fibers of our bundle are all equal to $A$, and our manifold
is 2-dimensional, the definition of the Chern character associated
with $\tau$ (from \cite[Definition~4.1]{Schick05}, but we have
included a normalization coefficient) reduces to
\begin{multline}
  \label{eq:6}
  \ch_\tau(\ell_\pi) = \tau\left(\exp\left(\frac{i\Omega}{2\pi}\right)\right) =
  \tau\bigg(
  \sum_{k=0}^\infty \frac{i\Omega/2\pi\wedge \dots \wedge i\Omega/2\pi}{k!}
  \bigg) =\\
  = \tau\left(\frac{i\Omega}{2\pi}\right) \in \Omega^2(M, \C).
\end{multline}
This is a closed form whose cohomology class does not depend on the
choice of the connection $\del$ (see \cite[Lemma~4.2]{Schick05}).

\vspace{\baselineskip}
A few remarks are in order before stating the next result.

Because $\Lambda$ is a locally ordered simplicial complex (recall the
partial order \textbf{o} from Section~\ref{subsec:constr-almost-flat}), every
2-simplex $\sigma$ may be written uniquely as $\langle x_i, x_j,
x_k\rangle$ with the vertices written in increasing \textbf{o}-order.
Whenever we write a simplex in this way it is implicit that the
vertices are written in increasing \textbf{o}-order.  We may write
$\sigma$ for $\sigma$ along with this order.

The orientation $[M]$ induces an orientation of the boundary of the
dual cell $U_i$ and in particular of the segment $U_{ik}^\sigma$.  Let
$s(\sigma) = 0$ if the initial endpoint of $U_{ik}^\sigma$ under this
orientation is the barycenter of $\sigma$, and let $s(\sigma) = 1$
otherwise.

\begin{theorem}
  \label{thm:chern-class}
  For a simplex $\sigma = \langle x_i, x_j, x_k\rangle$ of $\Lambda$,
  let $\xi_\sigma$ be the linear path
  \begin{displaymath}
    \xi_\sigma(t) = (1-t)\pi(s_{ik}) + t\pi(s_{ij})\pi(s_{jk}),\quad t\in[0,1]
  \end{displaymath}
  in $\GL(A)$.  Then
  \begin{displaymath}
    \tau \bigl( \pi_\sharp(\mu[M]) \bigr) =
    \sum_{\sigma}
    (-1)^{s(\sigma)} \widetilde{\Delta}_\tau(\xi_{\sigma}),
  \end{displaymath}
  where the sum ranges over all 2-simplices $\sigma$ of $\Lambda$.
\end{theorem}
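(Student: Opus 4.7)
The plan is to combine Theorem~\ref{thm:index-theorem} with an explicit Chern--Weil computation on the bundle $E_\pi$ equipped with the canonical connection $\del$ constructed in Section~\ref{sec:hilb-module-bundles}. By Theorem~\ref{thm:index-theorem}, $\tau\bigl(\pi_\sharp(\mu[M])\bigr) = \langle \ch_\tau(\ell_\pi), [M]\rangle$, which is the integral over $M$ of the degree-$2$ component of $\ch_\tau(\ell_\pi) = \tau(\exp(i\Omega/(2\pi)))$; since $\dim M = 2$, this component is simply $\tau(i\Omega/(2\pi))$. The goal thus becomes evaluating $\int_M \tau(\Omega)$ directly from the local data $(\omega_i, v_{ij})$.

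The decisive simplification is tracial: because $\tau$ is a trace, $\tau(\omega_i\wedge\omega_i)$ vanishes for any $A$-valued 1-form $\omega_i$ (the coefficient at $dx^\alpha\wedge dx^\beta$ is $\tau([a_\alpha,a_\beta]) = 0$). So $\tau(\Omega)|_{V_i} = d\,\tau(\omega_i)$ for the scalar 1-form $\tau(\omega_i)$, and Stokes on the dual-cell decomposition $M = \bigcup_i U_i$ (whose interiors partition $M$ and satisfy $U_i \subset V_i$) yields
\[
\int_M \tau(\Omega) = \sum_i \int_{\partial U_i}\tau(\omega_i) = \sum_{\sigma} \sum_{\text{edges of }\sigma}\int_{U^\sigma_{ab}}\bigl(\tau(\omega_a) - \tau(\omega_b)\bigr),
\]
where each interior segment $U^\sigma_{ab}$ is oriented by $\partial U_a$ and the contribution from $\partial U_b$ reverses orientation. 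From the paper's convention $\del_j s_j = v_{ji}\del_i s_i$ and the formula $\omega_i = \sum_k \chi_k v_{ki}^{-1}dv_{ki}$, one derives the transformation law $\omega_j = v_{ji}\omega_i v_{ji}^{-1} - dv_{ji}\cdot v_{ji}^{-1}$, and cyclicity of $\tau$ reduces the jump on each segment to $\pm\tau(v_{ab}^{-1}dv_{ab})$.

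Here the careful construction of Section~\ref{subsubsec:v-ij} pays off decisively: in a locally ordered simplex $\sigma = \langle x_i, x_j, x_k \rangle$, the maps $v_{ij}$ and $v_{jk}$ are \emph{constant} on $U^\sigma_{ij}$ and $U^\sigma_{jk}$ respectively (equal to $\pi(s_{ij})$ and $\pi(s_{jk})$), so those two segments contribute nothing. Only $v_{ik}$ varies on $U^\sigma_{ik}$: by construction it is a smooth reparametrization of the linear path $\xi_\sigma(t) = (1-t)\pi(s_{ik}) + t\,\pi(s_{ij})\pi(s_{jk})$, going from $\pi(s_{ik})$ at the midpoint of $\langle x_i,x_k\rangle$ to $\pi(s_{ij})\pi(s_{jk})$ near the barycenter and remaining constant beyond. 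Reparametrization invariance together with $dv_{ik}\equiv 0$ on the constant tail gives
\[
\int_{U^\sigma_{ik}} \tau(v_{ik}^{-1}dv_{ik}) = 2\pi i\,\widetilde{\Delta}_\tau(\xi_\sigma),
\]
and the orientation inherited from $\partial U_i$ (measured against the midpoint-to-barycenter direction of $\xi_\sigma$) is precisely what the combinatorial sign $(-1)^{s(\sigma)}$ is designed to encode.

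Summing over all 2-simplices, $\int_M \tau(\Omega)$ collapses to $2\pi i\sum_\sigma (-1)^{s(\sigma)}\widetilde{\Delta}_\tau(\xi_\sigma)$ (up to an overall sign), and combining with the normalization factor $i/(2\pi)$ in $\ch_\tau$ yields the stated formula. The main obstacle in carrying this out rigorously is sign bookkeeping: the orientation of $U^\sigma_{ik}$ inside $\partial U_i$ depends jointly on the orientation of $M$ and on the \textbf{o}-order of the vertices of $\sigma$, and must be reconciled with the direction of $\xi_\sigma$, with the sign in the gauge transformation law, and with the $(2\pi i)$-normalization in the Chern character. Beyond this, the argument is essentially forced: the tracial identity kills the nonlinear part of the curvature, and the Chern--Weil integral collapses to a one-dimensional sum along precisely the segments where the ``cocycle deficiency'' $v_{ij}v_{jk}\neq v_{ik}$ of $\pi$ is supported.
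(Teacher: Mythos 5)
Your proposal is correct and follows essentially the same route as the paper: Theorem~\ref{thm:index-theorem} to reduce to $\int_M\tau(\Omega)$, traciality to kill $\tau(\omega\wedge\omega)$, Stokes on the dual-cell decomposition, the observation that only $v_{ik}$ is non-constant on a simplex, and the identification of the remaining segment integral with the de la Harpe--Skandalis determinant (with $(-1)^{s(\sigma)}$ bookkeeping the orientation). The only cosmetic difference is that you compute the jump across $U^\sigma_{ik}$ via the abstract gauge-transformation law $\omega_k = v_{ki}\omega_i v_{ki}^{-1} - (dv_{ki})v_{ki}^{-1}$, whereas the paper substitutes the explicit local formula $\omega_i = \sum_l \chi_l v_{li}^{-1}dv_{li}$ restricted to the simplex; both give $\tau(v_{ki}^{-1}dv_{ki})$ on the relevant segment.
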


\begin{proof}
  The path $\xi_{\sigma}$ lies entirely in $\GL(A)$ because
  $\norm{\pi(s_{ik}) - \pi(s_{ij})\pi(s_{jk})} < \epsilon$.  It
  follows from Theorem~\ref{thm:index-theorem} (on
  page~\pageref{thm:index-theorem}) and Eq.~(\ref{eq:6}) above
  that
  \begin{displaymath}
    \tau \bigl( \pi_\sharp(\mu[M]) \bigr)
    = \langle \ch_\tau(\ell_\pi), [M] \rangle
    =  -\frac{1}{2\pi i} \int_M \tau(\Omega).
  \end{displaymath}
  We compute this integral.

  First observe that by the trace property of $\tau$ we have
  $\tau(\omega_l \wedge \omega_l) = 0$ for every $l$.  Thus
  \begin{multline*}
    \int_M \tau(\Omega)
    = \sum_l \int_{U_l} \tau(\Omega_l)
    = \sum_l \int_{U_l}
       \tau( d\omega_l + \omega_l\wedge \omega_l ) =\\
    = \sum_l \int_{U_l} \tau( d\omega_l )
    = \sum_l \int_{U_l} d\tau( \omega_l )
    = \sum_l \int_{\partial U_l} \tau( \omega_l ),
  \end{multline*}
  where we used Green's theorem for the last equality and $\partial
  U_l$ has the orientation induced from $[M]$.  Recall that $U_l$ is
  the dual cell to $v_l$.  Write this as a sum over the 2-simplices of
  $\Lambda$:
  \begin{displaymath}
    \sum_l \int_{\partial U_l} \tau( \omega_l )
    = \sum_l \sum_{\sigma} \int_{(\partial U_l)\cap\sigma} \tau( \omega_l )
    = \sum_{\sigma} \sum_l \int_{(\partial U_l)\cap\sigma} \tau( \omega_l ).
  \end{displaymath}
  Exactly three dual cells meet a 2-simplex $\sigma = \langle x_{i},
  x_{j}, x_{k}\rangle$---$U_{i}$, $U_{j}$, and $U_{k}$---so for each
  simplex there are three integrals we need to account for.  Let us
  treat each of these in turn.

  The definition of the connection forms (see
  Section~\ref{subsubsec:onnection-forms-def}) implies that $\omega_{i}$
  restricted to $\sigma$ equals
  \begin{displaymath}
    \omega_{i} =
    \chi_{k} v_{k i}^{-1} d v_{k i} +
    \chi_{j} v_{j i}^{-1} d v_{j i} =
    \chi_{k} v_{k i}^{-1} d v_{k i},
  \end{displaymath}
  where the last equality follows from the fact that $v_{j i}$ is
  constant.  Now, $(\partial U_{i})\cap\sigma$ is the union of the two
  segments $U_{i j}^\sigma$ and $U_{i k}^\sigma$.  Observe that
  $v_{ik}$ is constantly equal to $\pi( s_{ij} ) \pi( s_{jk} )$ on
  $V_i\cap V_j\cap V_k$ (see Section~\ref{subsubsec:v-ij}). Since $U_{i
    j}^\sigma \cap V_{k}\subseteq V_i\cap V_j\cap V_k$ and $\chi_{k}$
  vanishes outside $V_{k}$, we get
  \begin{displaymath}
    \int_{(\partial U_{i})\cap\sigma} \tau( \omega_{i} ) =
    \int_{U_{i j}^\sigma} \tau(\chi_{k} v_{k i}^{-1} d v_{k i}) +
    \int_{U_{i k}^\sigma} \tau(\chi_{k} v_{k i}^{-1} d v_{k i})
    =  \int_{U_{i k}^\sigma} \tau(\chi_{k} v_{k i}^{-1} d v_{k i}).
  \end{displaymath}
  
  The second integral $\int_{(\partial U_{j})\cap\sigma} \tau(
  \omega_{j} )$ vanishes.  This is because $v_{ij}$ and $v_{jk}$ are
  constant and so
  \begin{displaymath}
    \omega_j =
    \chi_i v_{ij}^{-1} d v_{ij} +
    \chi_k v_{kj}^{-1} d v_{kj} = 0.
  \end{displaymath}

  The third integral may be calculated just as the first, with the
  roles of $i$ and $k$ reversed.  We obtain
  \begin{displaymath}
    \int_{(\partial U_{k})\cap\sigma} \tau( \omega_{k} ) 
    =  \int_{U_{k i}^\sigma} \tau(\chi_{i} v_{i k}^{-1} d v_{i k}).
  \end{displaymath}  
  Combining the three integrals we get
  \begin{multline*}
    \sum_{\sigma} \sum_l \int_{(\partial U_l)\cap\sigma} \tau(\omega_l)
    = \sum_\sigma \bigg(
      \int_{U_{i k}^\sigma} \tau(\chi_{k} v_{k i}^{-1} d v_{k i}) +
      \int_{U_{k i}^\sigma} \tau(\chi_{i} v_{i k}^{-1} d v_{i k})
      \bigg)=\\
    = \sum_\sigma \int_{U_{i k}^\sigma} \tau(
      \chi_{k} v_{k i}^{-1} d v_{k i} - \chi_{i} v_{i k}^{-1} d
      v_{i k} ),
  \end{multline*}
  where the last equality is due to the opposite orientations of the
  segment $U_{i k}^\sigma$ in the preceding two integrals.

  It follows from $v_{ik} v_{ki} = 1$ that $d v_{ik}\, v_{ik}^{-1} +
  v_{ki}^{-1}d v_{ki} =0$.  Therefore, the last line in the equation
  above is equal to
  \begin{displaymath}
    \sum_{\sigma}
      \int_{U_{ik}^\sigma} \tau(
        \chi_k v_{ki}^{-1} d v_{ki} +
        \chi_i v_{ki}^{-1} d v_{ki}
        )
    =
    \sum_{\sigma}
      \int_{U_{ik}^\sigma} \tau( v_{ki}^{-1} d v_{ki} )
    =
    -\sum_{\sigma}
      \int_{U_{ik}^\sigma} \tau( v_{ik}^{-1} d v_{ik} ).
  \end{displaymath}

  To arrive at the conclusion of the theorem, consider the restriction
  of $v_{ik}$ to the segment $U_{i k}^\sigma$.  This is the segment
  between the barycenter of $\sigma$, where $v_{i k}$ takes the value
  $\pi(s_{ij})\pi(s_{jk})$, and the barycenter of $\langle x_i,
  x_k\rangle$, where $v_{ik}$ takes the value $\pi(s_{ik})$ (see
  Section~\ref{subsubsec:v-ij}).  Then
  
  \begin{displaymath}
    \int_{U_{ik}^\sigma} \tau(v_{ki}^{-1} d v_{ki})
    =
    (-1)^{s(\sigma)}2\pi i\cdot \widetilde{\Delta}_\tau(\xi_\sigma).
  \end{displaymath}
  This concludes the proof.
\end{proof}

\section{Oriented surfaces}
\label{sec:oriented-surfaces}

For the proof of Theorem~\ref{thm:Mg-formula}, we will use a
convenient triangulation $\Lambda_g$ of the orientable genus $g$
surface $\Sigma_g$ that we proceed to describe.  The covering space of
$\Sigma_g$ is the open disc and we may take as a fundamental domain a
regular $4g$-gon, call it $\widetilde{\Sigma}_g$, drawn in the
hyperbolic plane.

Figure~\ref{fig:m2-octagon} depicts a procedure to obtain
$\widetilde{\Sigma}_2$ by gluing together two copies of
$\widetilde{\Sigma}_1$.  (We will give a more explicit description of
$\widetilde{\Sigma}_g$ in a moment).  It also illustrates the labeling
we use for the (oriented) sides of $\widetilde{\Sigma}_1$ and
$\widetilde{\Sigma}_2$.  To obtain $\Sigma_1$, for example, we
identify the side $\mathsf{a}$ with $*\mathsf{a}$ and the side
$\mathsf{b}$ with $*\mathsf{b}$.  To obtain the double torus
$\Sigma_2$, we identify $\mathsf{a}_k$ with $*\mathsf{a}_k$ and
$\mathsf{b}_k$ with $*\mathsf{b}_k$ for $k\in\{1, 2\}$.

\begin{figure}
  \centering
   \subfloat[]{\includegraphics{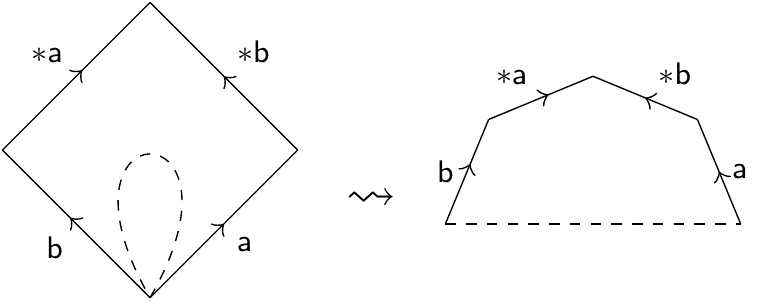}}\hfill
   \subfloat[]{\includegraphics{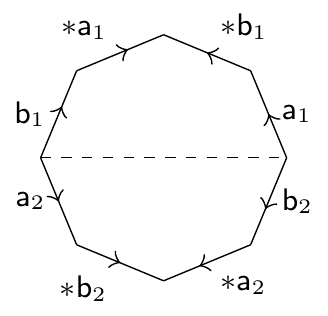}}\\
  \caption{
    (\textsc{a}) The fundamental domain $\widetilde{\Sigma}_1$ (with a hole).
    (\textsc{b}) The fundamental domain $\widetilde{\Sigma}_2$.
    }
  \label{fig:m2-octagon}
\end{figure}

\subsection{Triangulations}
Let us first define a triangulation $\widetilde{\Lambda}_g$ of the
fundamental domain $\widetilde{\Sigma}_g$. We do this by gluing $g$
triangulated copies of $\widetilde{\Sigma}_1$ together.
Figure~\ref{fig:k-copy}\textsc{a} on page~\pageref{fig:k-copy} shows
the triangulation for the $k$th copy of $\widetilde{\Sigma}_1$ (with a
hole), call it $\widetilde{\Lambda}_1^k$.  Ignore the labels on the
edges and the highlighted edges for now.  The vertex labeling also
indicates how to glue $\widetilde{\Lambda}_1^k$ to
$\widetilde{\Lambda}_1^{k-1}$ and $\widetilde{\Lambda}_1^{k+1}$, with
addition modulo $g$. Figure~\ref{fig:k-copy}\textsc{b} illustrates the
result of this gluing, the end-result being $\widetilde{\Lambda}_g$ by
definition.

The underlying space of $\widetilde{\Lambda}_g$ is
$\widetilde{\Sigma}_g$.  Identifying all the vertices $v_i^k$, as well
as identifying $a_i^k$ with $*a_i^k$ and $b_i^k$ with $*b_i^k$, for
each $i\in\{1,2\}$ and $k\in\{1, \dots, n\}$, yields a triangulation
$\Lambda_g$ of $\Sigma_g$.

\subsection{Surface groups}
\label{subsec:deck-trans}
We identify the fundamental group $\Gamma_g$ of $\Sigma_g$ with the
group of deck transformations of the universal covering space of
$\Sigma_g$.  We give a more concrete description of this group now.

The fundamental domain $\widetilde{\Sigma}_g$ is a regular $4g$-gon.
We write $\mathsf{a}_k$, $\mathsf{b}_k$ $*\mathsf{a}_k$ and
$*\mathsf{b}_k$, $k\in \{1,\dots, n\}$, for its (oriented) sides.  The
triangulation $\widetilde{\Lambda}_g$ gives a subdivision of the side
$\mathsf{a}_k$ into the three edges in the path $(v_0^k, a_1^k, a_2^k,
v_1^k)$ (with orientation given by the directed edge $(a_1^k,
a_2^k)$).  The subdivision of the sides $\mathsf{b}_k$,
$*\mathsf{a}_k$ and $*\mathsf{b}_k$ is similar.  See
Figure~\ref{fig:k-copy}\textsc{a}.

The group of deck transformations $\Gamma_g$ is generated by the
hyperbolic isometries $\alpha_k$ and $\beta_k$, $k\in \{1, \dots,
g\}$, defined as follows: $\alpha_k$ maps $*\mathsf{a}_k$ to
$\mathsf{a}_k$ in such a way that, locally, the half-plane bounded by
$*\mathsf{a}_k$ containing $\widetilde{\Sigma}_g$ is mapped to the
half-plane bounded by $\mathsf{a}_k$ but opposite
$\widetilde{\Sigma}_g$.  The transformation $\beta_k$ is defined
analogously, mapping $*\mathsf{b}_k$ to $\mathsf{b}_k$.  We refer the
reader to \cite[Chapter~VII]{Iversen92} for more details.  When $g=1$,
for example, the transformations $\alpha_1$ and $\beta_1$ are just
translations.  See Figure~\ref{fig:m1-triangulated}, where we have
omitted the sub- and superscripts corresponding to $k=1$, since $g=1$.

For $k\in \{1, \dots, g\}$, let
\begin{displaymath}
  \kappa_k = \prod_{j=1}^k [\alpha_k, \beta_k]
\end{displaymath}
and let $\kappa_0 = 1$.  We have that $\kappa_g = 1$.

\subsection{Local orders and trees}
We need $\Lambda_g$ to be locally ordered, so we proceed to fix a
partial order on the vertices of $\Lambda_g$ such that the vertices of
every simplex form a totally ordered set.  Let us define an order on
the vertices of $\widetilde{\Lambda}_g$ that drops down to the order
we need.  On the $k$th copy $\widetilde{\Lambda}_1^k$, the
corresponding order is indicated in Figure~\ref{fig:k-copy}\textsc{a}
by arrows on the edges, always pointing from a smaller vertex to a
larger one. It is defined as follows:
\begin{itemize}
\item for the ``inner'' vertices we go ``counter-clockwise'': for
  fixed $k\in\{1, \dots, g\}$, $w^k_i < w^k_j$ if $i < j$, except when
  $k=g$ and $j=4$ (in which case $w_4^g = w_0^1$ and we already have
  $w_0^1 < w_k^i$);
\item the ``inner'' vertices are larger than the ``outer'' ones:
  $w_i^k > v_j^l$, $a_j^l$, $b_j^l$, $*a_j^l$, $*b_j^l$ for all $i$,
  $j$, $k$ and $l$;
\item for the ``outer'' vertices: $v_i^k < a_j^l, b_j^l, *a_j^l,
  *b_j^l$ for all $i$, $j$, $k$ and $l$; for every $k$, $a_1^k <
  a_2^k$, $*a_1^k < *a_2^k$, and similarly for the $b_j^k$.
\end{itemize}

Finally, we will need a spanning tree $T_g$ of $\Lambda_g$, and a lift
$\widetilde{T}_g$ to the triangulation $\widetilde{\Lambda}_g$ of the
fundamental domain $\widetilde{\Sigma}_g$.  Again, we define
$\widetilde{T}_g$ first.  It is obtained as the union of the edge
between $w_0^1$ and $v_0^1$ (including those two vertices) and trees
in each copy $\Sigma_1^k$. The tree in $\Sigma_1^k$ is depicted in
Figure~\ref{fig:k-copy}\textsc{a} by highlighted (heavier) edges.
This drops to a spanning tree $T_g$ of $\Sigma_g$.  We regard $T_g$ as
``rooted'' at the vertex $v_0^1$.  (In the notation of
\ref{subsec:s_ij}, where the vertices were labeled consecutively as
$x_0, \dots, x_N$, we have that $v_0^1 = x_0$.)

\section{Proof of the main result}
\label{sec:proof-main-result}

This section contains the proof of Theorem~\ref{thm:Mg-formula}.  The
proof is split into a number of lemmas.

To apply Theorem~\ref{thm:chern-class} we will first compute the group
element $s_{ij}$ corresponding to each edge $x_ix_j$ of $\Lambda_g$ ,
in the sense discussed in Section~\ref{subsec:s_ij}.  Equivalently, we
compute group elements corresponding to edges in the cover
$\widetilde{\Lambda}_g$, keeping in mind that the lifts of any edge of
$\Lambda_g$ will all correspond to the same group element.

A concise way of stating the result of these computations is to label
each edge in Figure~\ref{fig:k-copy}\textsc{a} with the corresponding
group element.

\begin{lemma}
  \label{lem:edge-labels}
  The labels in Figure~\ref{fig:k-copy}\textsc{a} are correct.
\end{lemma}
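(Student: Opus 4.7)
The plan is to verify each edge label directly against the definition of $s_{ij}$ from Section~\ref{subsec:s_ij}, namely that $s_{ij}$ is the class in $\Gamma_g$ of the loop $\gamma_i * x_i x_j * \gamma_j^{-1}$, where $\gamma_i$ is the unique path in the spanning tree $T_g$ from the root $v_0^1$ to $x_i$. The most efficient way to carry out the verification is via the lift $\widetilde{T}_g\subseteq \widetilde{\Lambda}_g$: using the same reasoning as in the proof of Lemma~\ref{lem:mishchenko-line-bundle}, if one lifts the edge $x_i x_j$ starting from the canonical lift $\widetilde{x}_i\in \widetilde{T}_g$, then the endpoint has the form $s\cdot \widetilde{x}_j$ for a unique deck transformation $s\in \Gamma_g$, and $s_{ij} = s$. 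So the computation reduces to tracking, edge by edge, how the two endpoints sit relative to the chosen lift of the tree in the fundamental domain $\widetilde{\Sigma}_g$.

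I would organize the verification in three passes. First, every edge of $\widetilde{T}_g$ has $s_{ij}=1$, since the associated loop is contractible inside the tree; this immediately labels the highlighted edges in Figure~\ref{fig:k-copy}\textsc{a}. Second, any edge of $\widetilde{\Lambda}_g$ whose two endpoints both have canonical lifts lying in the interior of $\widetilde{\Sigma}_g$ (or on a side before identification) connects $\widetilde{x}_i$ to $\widetilde{x}_j$ directly, so also receives $s_{ij}=1$. Third, for edges that "cross" the boundary of $\widetilde{\Sigma}_g$---i.e., edges whose endpoints involve the identifications $\mathsf{a}_k\sim *\mathsf{a}_k$ (via $\alpha_k$) or $\mathsf{b}_k\sim *\mathsf{b}_k$ (via $\beta_k$)---the label must equal the deck transformation needed to carry $\widetilde{x}_j$ to the adjacent vertex the lifted edge actually reaches from $\widetilde{x}_i$.

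For this last and main case the key geometric observation, taken from the construction of $\widetilde{\Sigma}_g$ as the union of the $g$ pieces $\widetilde{\Sigma}_1^k$, is that the $k$th piece is not adjacent to the root $v_0^1 = x_0$ directly but is reached, inside $\widetilde{\Sigma}_g$, only after applying the composition $\kappa_{k-1} = \prod_{j=1}^{k-1}[\alpha_j,\beta_j]$ of earlier deck transformations. Consequently an edge on the side $\mathsf{a}_k$ of $\widetilde{\Sigma}_1^k$ traversed "outward" is labeled by a conjugate of $\alpha_k$ of the form $\kappa_{k-1}\alpha_k \kappa_{k-1}^{-1}$ (and similarly for $\mathsf{b}_k$, $*\mathsf{a}_k$, $*\mathsf{b}_k$ with $\beta_k$ and inverses, and with intermediate conjugating factors picking up the partial products of $\alpha_k$ and $\beta_k$ within the $k$th commutator). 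Running the boundary circuit of $\widetilde{\Sigma}_g$ and collecting the labels reproduces $\kappa_g = \prod_{k=1}^g [\alpha_k,\beta_k]$, which equals $1$ in $\Gamma_g$; this provides a global consistency check that the individual edge labels are compatible.

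The main obstacle is purely combinatorial: enumerating the edges near the corners of $\widetilde{\Sigma}_g$ and at the gluing seams between consecutive pieces $\widetilde{\Sigma}_1^{k-1}$ and $\widetilde{\Sigma}_1^k$, and keeping correct track of (i) which of the four boundary sides each endpoint belongs to, (ii) the orientation of the edge (which determines whether the label is $\alpha_k^{\pm 1}$ or $\beta_k^{\pm 1}$), and (iii) the conjugating factor $\kappa_{k-1}$ together with any additional inner factor from the presentation of the $k$th commutator reached so far. Once these accounting rules are set up once and for all, the check on any individual edge in Figure~\ref{fig:k-copy}\textsc{a} is mechanical.
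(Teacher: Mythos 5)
Your general strategy matches the paper's: identify the label $s_{ij}$ by lifting the edge to the fundamental domain, using the lift $\widetilde{T}_g$ of the spanning tree, and reading off the deck transformation that relates the two copies of the tree that the added edge connects. However, the explicit formula you propose for the nontrivial labels is not correct, and the error is not merely cosmetic.

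You assert that an edge ``on the side $\mathsf{a}_k$ traversed outward'' receives the label $\kappa_{k-1}\alpha_k\kappa_{k-1}^{-1}$, i.e.\ a conjugate of the generator, with analogous conjugates for the other sides and for the inner factors of the $k$th commutator. The actual labels (compare Notation~\ref{not:subset-F} and the paper's Claims~1--3) are of two quite different shapes. The edges $a_i^k w_j^k$ and $b_i^k w_j^k$ joining a side vertex to an inner vertex are labeled by the bare generators $\alpha_k^{-1}$ and $\beta_k^{-1}$, with no $\kappa_{k-1}$ factor at all. This is because the inner vertices $w_j^k$ lie on $\widetilde{T}_g$ itself, which is a tree contained in the single fundamental domain $\widetilde{\Sigma}_g$ rooted at $v_0^1$; reaching the $k$th wedge through the interior of the polygon involves no deck transformation whatsoever, so your premise that ``the $k$th piece is reached only after applying $\kappa_{k-1}$'' is false. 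The remaining nontrivial edges, those incident to a corner vertex $v_i^k$, are labeled by the left products $\kappa_{k-1}$, $\kappa_{k-1}\alpha_k$, $\kappa_{k-1}\alpha_k\beta_k$, $\kappa_{k-1}\alpha_k\beta_k\alpha_k^{-1}$, computed by induction on $k$ from the vertex relations $v_0^1=\kappa_{k-1}v_0^k$ and $v_0^k = \alpha_k v_3^k = \alpha_k\beta_k v_2^k = \cdots$. These are one-sided products, not conjugates; a conjugate $\kappa_{k-1}\alpha_k\kappa_{k-1}^{-1}$ would arise for a differently shaped tree (one routed entirely along the polygon boundary rather than through the interior), but not for the tree the paper actually chose. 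The ``boundary-circuit'' consistency check you mention is a reasonable sanity test but does not substitute for getting the individual labels right, and with your proposed labels the downstream computation in Lemma~\ref{lem:pi_0-mult-on-triangles} would fail.
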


\begin{proof}
  We carry out the computations in three separate claims.
  
  \begin{claim*}
    An edge of the form $a_i^k w_j^k$ corresponds to $\alpha_k^{-1}\in
    \Gamma_g$.  Similarly, an edge of the form $b_i^k w_j^k$
    corresponds to $\beta_k^{-1}\in \Gamma_g$.
  \end{claim*}
  Consider $a_i^k w_j^k$ first.  When we add this edge to the forest
  that is the union of all the lifts of $T_g$ (that is, translates of
  $\widetilde{T}_g$), we obtain a unique path $P$ between $v_0^1$, our
  root vertex, and some translate $s v_0^1$, where $s\in \Gamma_g$.
  We regard $P$ as directed in the direction of the edge $a_i^k w_j^k$
  that we started with, so it is a path from $s v_0^1$ to $v_0^1$.  It
  therefore drops down to a loop in $\Sigma_g$ whose class is
  $s^{-1}$, the group element we want to compute (see
  \cite[Proposition~1.39]{Hatcher02}, for example).  Now notice that
  because $*a_i^k$ belongs to $\widetilde{T}_g$, its translate
  $\alpha_k (*a_i^k) = a_i^k$ belongs to the translate $\alpha_k
  \widetilde{T}_g$ of $\widetilde{T}_g$.  Thus $P$ is a path between
  $v_0^k$ and $\alpha_k v_0^k$.  The corresponding group element is
  therefore $\alpha_k^{-1}$.  An entirely similar argument applies to
  the edge $b_i^k w_j^k$.
  
  \begin{claim*}
    Any edge between inner vertices (vertices of the form $w_i^k$)
    corresponds to $1\in \Gamma_g$.  The edges $a_1^k a_2^k$, $b_1^k
    b_2^k$, $*a_1^k {*a_2^k}$, and $*b_1^k {*b_2^k}$ all correspond to
    $1\in \Gamma_g$.
  \end{claim*}
  We proceed as in the previous claim.  Any edge between inner
  vertices is either in $\widetilde{T}_g$ or between two vertices that
  are in $\widetilde{T}_g$.  The associated path we get is therefore
  from $v_0^k$ to itself.  The same is true of the edges $b_1^k b_2^k$
  and $*a_1^k {*a_2^k}$.  It follows that the corresponding group
  element is $1$.  Since $a_1^k a_2^k$ and $*a_1^k {*a_2^k}$ are both
  lifts of the same edge, they correspond to the same element.
  Similarly, $*b_1^k {*b_2^k}$ corresponds to $1$.

  \begin{claim*}
    An edge that is incident to $v_i^k$ and to a vertex $z$ in the
    tree $\widetilde{T}_g$ corresponds to the element $s\in \Gamma_g$
    such that $v_0^1 = s v_i^k$.  (The edge is given the orientation
    induced by the order on the vertices, as usual.)  For $k\in \{1,
    \dots, g\}$,
    \begin{align*}
      v_0^1 &= \kappa_{k-1}\cdot v_0^k\\
      v_0^1 &= \kappa_{k-1} \alpha_k\beta_k\alpha_k^{-1}\cdot v_1^k\\
      v_0^1 &= \kappa_{k-1} \alpha_k\beta_k\cdot v_2^k\\
      v_0^1 &= \kappa_{k-1} \alpha_k\cdot v_3^k\\
    \end{align*}
    (Recall that
    $\kappa_k$ is the product of commutators
    $
      [\alpha_1, \beta_1] [\alpha_2, \beta_2] \cdots
      [\alpha_k, \beta_k]
    $
    for $k\in \{1, \dots, g\}$, and that $\kappa_0 = 1$.)
  \end{claim*}
  Observe that, because of how the order was defined, $v_i^k < z$
  always holds.  When we add the edge $v_i^k z$ to the tree
  $\widetilde{T}_g$ we obtain a path from $v_i^k$ to $v_0^1$.  (See
  Figure~\ref{fig:k-copy}, but keep in mind that in the case $k=1$ the
  edge $v_0^1 w_0^1$ belongs to the tree.)  It follows that the
  corresponding element is the $s\in \Gamma_g$ such that $v_0^1 = s
  v_i^k$.

  To compute these elements $s$ we argue by induction on $k$.  Assume
  $k=1$.  We observe that
  \begin{displaymath}
    v_4^1\overset{\beta_1^{-1}}{\longmapsto}
    v_1^1\overset{\alpha_1^{-1}}{\longmapsto}
    v_2^1\overset{\beta_1}{\longmapsto}
    v_3^1\overset{\alpha_1}{\longmapsto}
    v_0^1.
  \end{displaymath}
  Indeed, from the definition (see Section~\ref{subsec:deck-trans}) we
  see that the transformation $\alpha_1$ takes $v_3^1$ to
  $v_0^1$---think of the side $*\mathsf{a}_1 = (v_3^1, *a_1^1, *a_2^1,
  v_2^1)$ being mapped to the side $\mathsf{a}_1 = (v_0^1, a_1^1,
  a_2^1, v_1^1)$: the vertex $*a_1^1$ is mapped to $a_1^1$ and so
  $v_3^1$ is mapped to $v_0^1$.  We also see from
  Section~\ref{subsec:deck-trans} and
  Figure~\ref{fig:k-copy}\textsc{a} that $\beta_1$ maps $v_2^1$ to
  $v_3^1$, and so $v_0^1 = \alpha_1\beta_1\cdot v_2^1$.  A similar
  argument shows that $v_0^1 = \alpha_1\beta_1\alpha_1^{-1}\cdot
  v_1^1$ and that
  \begin{displaymath}
    v_0^1 = \alpha_1\beta_1\alpha_1^{-1}\beta_1^{-1}\cdot v_4^1 =
    \kappa_1\cdot v_4^1.
  \end{displaymath}
  
  Assuming the computations hold for $k-1$, we prove them for $k$.  In
  fact, most of the work is already done.  The same argument we used
  for the case $k=1$ shows that
  \begin{displaymath}
    v_4^k\overset{\beta_k^{-1}}{\longmapsto}
    v_1^k\overset{\alpha_k^{-1}}{\longmapsto}
    v_2^k\overset{\beta_k}{\longmapsto}
    v_3^k\overset{\alpha_k}{\longmapsto}
    v_0^k.
  \end{displaymath}
  The inductive hypothesis implies that
  \begin{displaymath}
    \kappa_{k-1} v_0^k = \kappa_{k-1} v_4^{k-1} = v_0^1,
  \end{displaymath}
  This ends the proof of the claim.

  These three claims prove that the labels in
  Figure~\ref{fig:k-copy}\textsc{a} are correct.

  (The labels in Figure~\ref{fig:m1-triangulated} also follow from
  these calculations, but may be obtained by more straightforward
  arguments because the generators of $\Gamma_1 \cong \Z^2$ may be
  regarded as shifts in the plane.)
\end{proof}

\begin{figure}
  \centerfloat
  \includegraphics{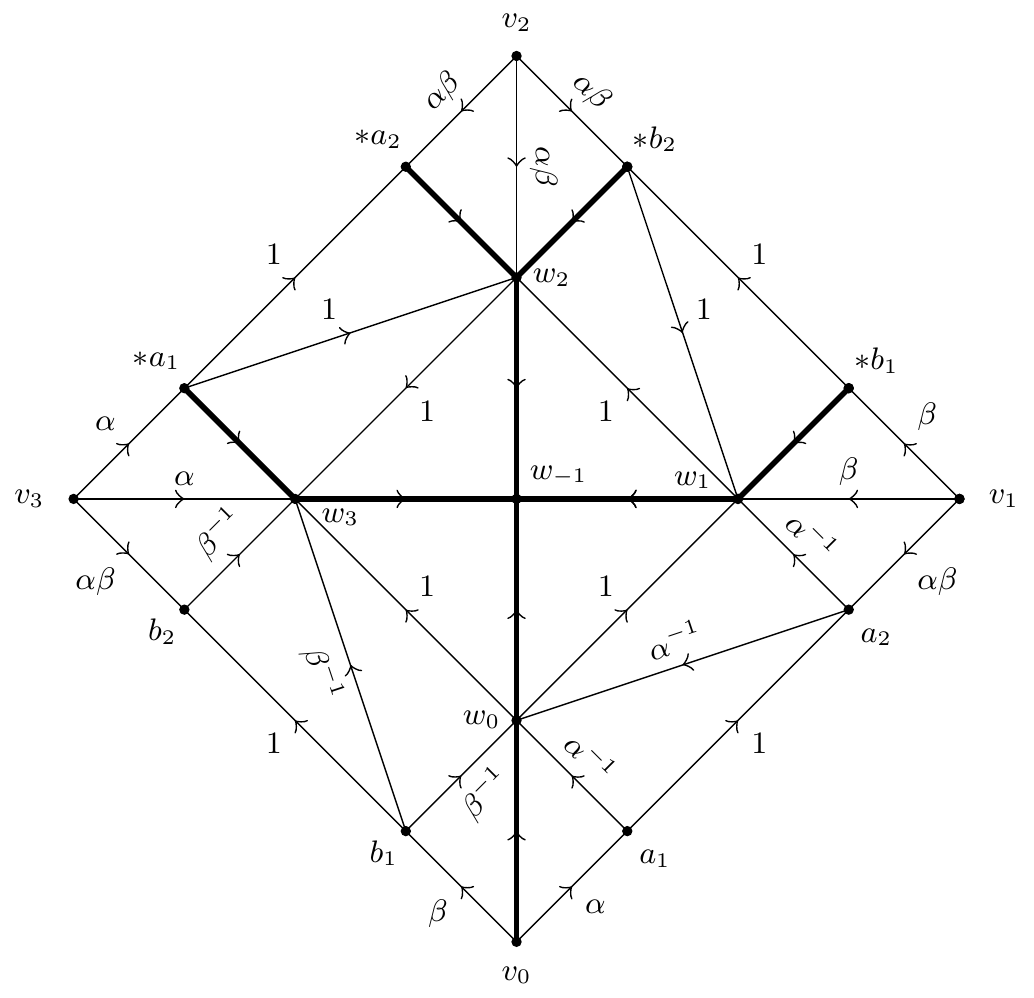}
  \caption{%
    The triangulation $\widetilde{\Lambda}_1$ of
    $\widetilde{\Sigma}_1$.  Edges are labeled with the group element
    associated with the loop they induce.  The spanning tree $T_1$ of
    $\Lambda_1$ (heavier edges) is ``rooted'' at $v_0$.}
  \label{fig:m1-triangulated}
\end{figure}

\begin{figure}
  \centerfloat
  \subfloat[]{\includegraphics{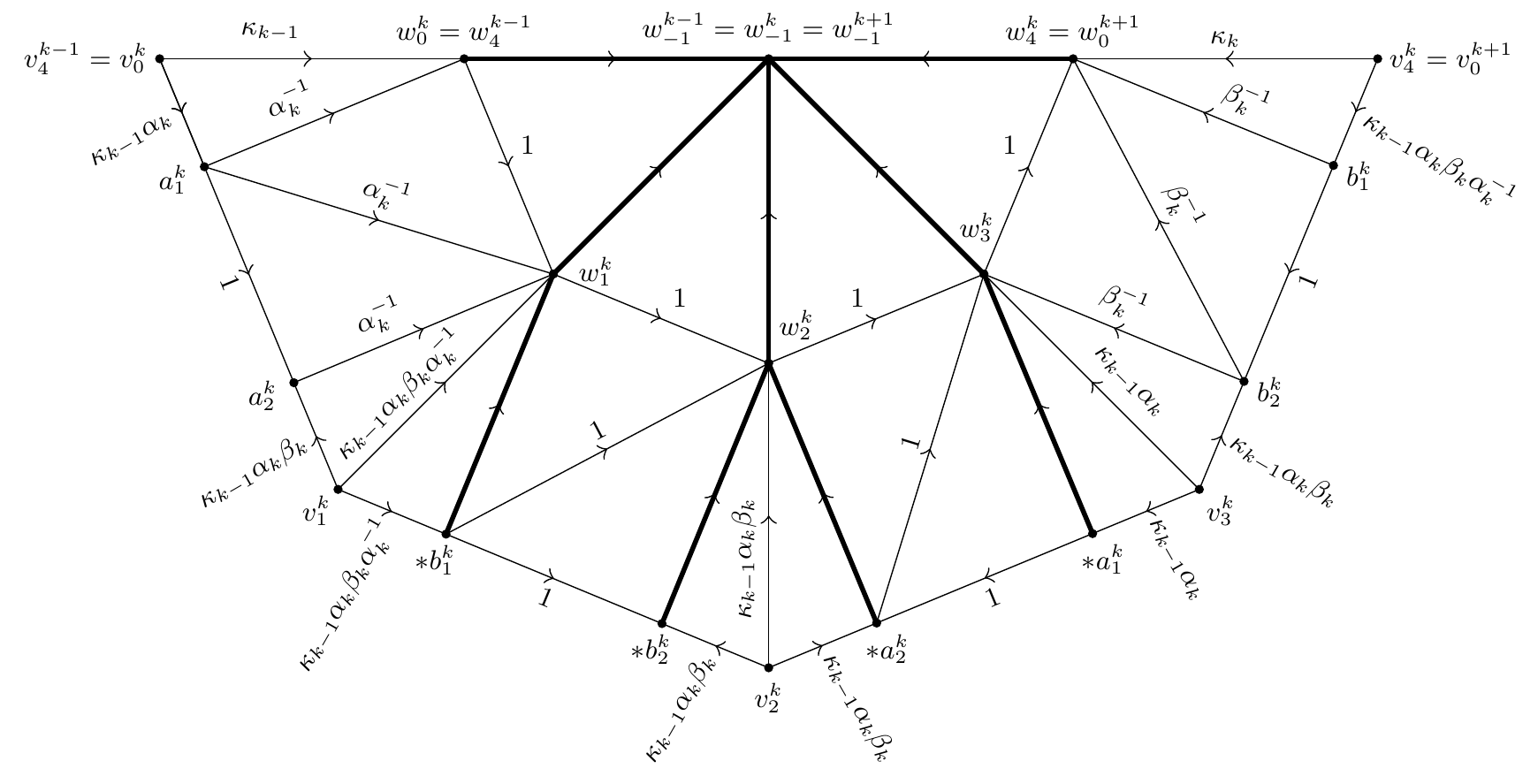}}\\[15pt]
  \subfloat[]{\includegraphics{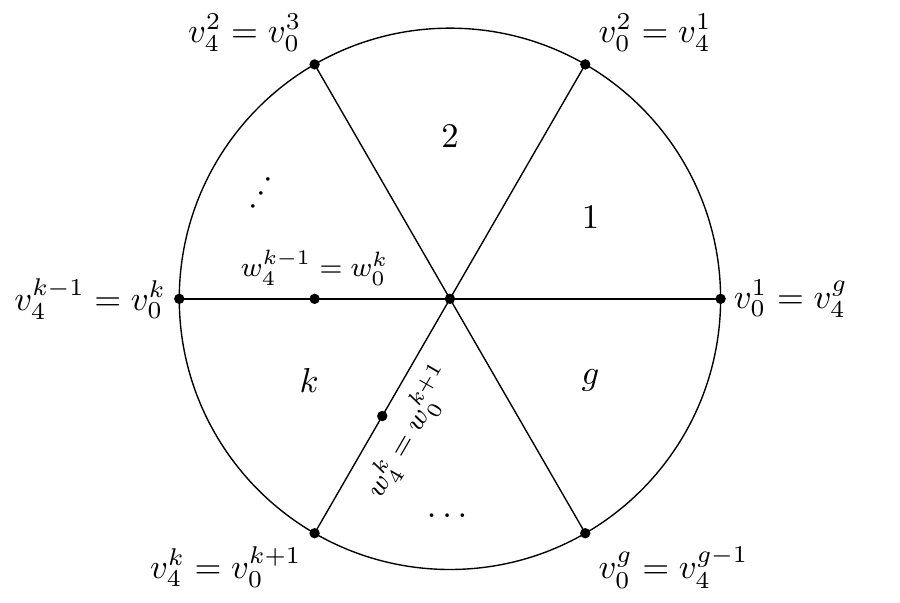}}
  \caption{%
    \textsc{(a)} The triangulation we use for $\widetilde{\Sigma}_1^k$,
    the $k$th copy of $\widetilde{\Sigma}_1$ (with a hole).  Every edge is
    labeled with the element of $\Gamma_g$ corresponding to the loop
    it induces.
    \textsc{(b)} How the simplicial complex $\widetilde{\Sigma}_g$ is
    defined.  The $k$th ``wedge'' is pictured in \textsc{(a)}.%
  }
  \label{fig:k-copy}
\end{figure}

\begin{notation}
\label{not:subset-F}
For $k\in \{1,\dots, g\}$, let
\begin{displaymath}
  {\cal F}_k = \{
  \alpha_k^{-1},\
  \beta_k^{-1},\
  \kappa_{k-1},\
  \kappa_{k-1} \alpha_k,\
  \kappa_{k-1} \alpha_k\beta_k,\
  \kappa_{k-1} \alpha_k\beta_k\alpha_k^{-1}
  \}.
\end{displaymath}
Notice that the set $\mathcal{F} = \{s_{ij}\}$ considered in
Section~\ref{subsec:s_ij} is equal to the union $\mathcal{F}_1\cup
\mathcal{F}_1^{-1}\cup \dots \cup \mathcal{F}_g\cup
\mathcal{F}_g^{-1}$ by Lemma~\ref{lem:edge-labels}.
\end{notation}

\subsection{Choosing quasi-representations}
We want to apply Theorem~\ref{thm:chern-class} using the labels
obtained in Lemma~\ref{lem:edge-labels} and some convenient choice of
a quasi-representation of $\Gamma_g$ in $U(A)$.  We begin by proving a
slightly stronger version Proposition~\ref{prop:existence-of-quasi},
which guarantees the existence of quasi-representations (under certain
conditions).  Let us set up some notation first.

For certain unitaries $u_1, v_1, \dots, u_g, v_g$ in $A$ we will need
to produce a quasi-representation $\pi$ satisfying
\begin{equation}
  \label{eq:7}
  \pi(\alpha_k) = u_k, \text{ and }
  \pi(\beta_k) = v_k\quad \forall k\in \{1, \dots, g\}.
\end{equation}
Write $\F_{2g} = \langle \hat{\alpha}_1, \hat{\beta}_1, \dots,
\hat{\alpha}_g, \hat{\beta}_g\rangle$ for the free group on $2g$
generators.  Let $q\colon \F_{2g}\to \Gamma_g$ and $\hat{\pi}\colon
\F_{2g}\to U(A)$ be the homomorphisms given by
\begin{displaymath}
  q(\hat{\alpha}_k) = \alpha_k,\quad
  q(\hat{\beta}_k) = \beta_k
\end{displaymath}
and
\begin{displaymath}
  \hat{\pi}(\hat{\alpha}_k) = u_k,\quad
  \hat{\pi}(\hat{\beta}_k) = v_k
  \end{displaymath}
  for all $k\in\{1, \dots, g\}$.  Notice that the kernel of $q$ is the
  normal subgroup generated by
\begin{displaymath}
  \hat{\kappa}_g := \prod_{k=1}^g [\hat{\alpha}_k, \hat{\beta}_k]
\end{displaymath}
and therefore consists of products of elements of the form
$\hat{\gamma} \hat{\kappa}_g^{\pm 1} \hat{\gamma}^{-1}$ where
$\hat{\gamma}\in \F_{2g}$.

Choose a set-theoretic section $s\colon \Gamma_g\to \F_{2g}$ of $q$
such that $s(1) = 1$,
\begin{displaymath}
  s(\alpha_k) = \hat{\alpha}_k,\quad
  \text{and}\quad
  s(\beta_k) = \hat{\beta}_k\quad \forall
  k\in \{1, \dots, g\}.
\end{displaymath}

\begin{lemma}
  \label{lem:existence-of-quasi-homs}
  For all $\epsilon > 0$ there exists $\delta(\epsilon) > 0$ such that
  if $A$ is a unital $C^*$-algebra and $u_1, v_1, \dots, u_g, v_g\in
  U(A)$ satisfy
  \begin{equation}
    \label{eq:8}
    \biggl\| \prod_{i=1}^g [u_i,v_i] - 1 \biggr\| <      
    \delta(\epsilon),
  \end{equation}
  then $\pi= \hat{\pi}\circ s$ (with $s$ as constructed above) is an
  $(\mathcal{F}, \epsilon)$-representation satisfying
  Eq.~(\ref{eq:7}).
\end{lemma}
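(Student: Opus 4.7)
The equality $\pi(1) = 1$ is built in by our choice $s(1) = 1$, so the issue is to verify, for all $s,t\in {\cal F}$, the two estimates $\|\pi(st) - \pi(s)\pi(t)\| < \epsilon$ and $\|\pi(s^{-1}) - \pi(s)^*\| < \epsilon$. The strategy is to compare the homomorphism $\hat\pi$ with $\pi = \hat\pi\circ s$ by tracking the failure of $s$ to be multiplicative, which lives entirely in $\ker q$.

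For $s,t\in {\cal F}$, set $r_{s,t} := s(s)\,s(t)\,s(st)^{-1}\in \F_{2g}$. Applying $q$ shows $r_{s,t}\in \ker q$, and since $\ker q$ is the normal closure of $\hat\kappa_g$, we may write
\[
r_{s,t} = \prod_{\ell=1}^{N(s,t)} \hat\gamma_{\ell}\, \hat\kappa_g^{\varepsilon_\ell}\, \hat\gamma_\ell^{-1}
\]
for some $\hat\gamma_\ell \in \F_{2g}$ and $\varepsilon_\ell \in \{\pm 1\}$. Similarly $s(s^{-1})\, s(s)\in \ker q$ admits such a decomposition with some length $N'(s)$. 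Because ${\cal F}$ is finite, the quantity $N := \max\big\{N(s,t),\, N'(s) : s,t\in {\cal F}\big\}$ is a finite constant depending only on ${\cal F}$ (and on the fixed section $s$). I would set $\delta(\epsilon) := \epsilon / N$.

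Now for any $\hat\gamma\in \F_{2g}$, the unitary $\hat\pi(\hat\gamma\,\hat\kappa_g^{\pm 1}\,\hat\gamma^{-1})$ is a unitary conjugate of $\hat\pi(\hat\kappa_g)^{\pm 1} = \big(\prod_{k=1}^g [u_k,v_k]\big)^{\pm 1}$, so by hypothesis \eqref{eq:8} it is within $\delta$ of $1$. Using the elementary bound $\|xy - 1\| \le \|x-1\| + \|y - 1\|$ for unitaries $x, y$, I would iterate over the $N(s,t)$ factors to conclude $\|\hat\pi(r_{s,t}) - 1\| \le N\delta < \epsilon$. Since $\hat\pi(s(st))$ is unitary,
\[
\|\pi(s)\pi(t) - \pi(st)\|
= \|\hat\pi(r_{s,t})\hat\pi(s(st)) - \hat\pi(s(st))\|
\le \|\hat\pi(r_{s,t}) - 1\| < \epsilon.
\]
The inverse condition is handled identically starting from $s(s^{-1})\,s(s)\in \ker q$, giving $\|\pi(s^{-1})\pi(s) - 1\| < \epsilon$ and hence $\|\pi(s^{-1}) - \pi(s)^*\| < \epsilon$.

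The only non-routine point is verifying that $N$ can be chosen uniformly over the finite set ${\cal F}$; this is automatic since $N(s,t)$ and $N'(s)$ depend only on $s, t$ and on the fixed section, not on $A$, $\tau$, or the unitaries $u_k, v_k$. Hence $\delta(\epsilon) = \epsilon/N$ suffices.
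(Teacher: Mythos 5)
Your proof is correct and follows essentially the same route as the paper: decompose $s(\gamma)s(\gamma')s(\gamma\gamma')^{-1}\in\ker q$ as a product of conjugates of $\hat\kappa_g^{\pm 1}$, bound the deviation from $1$ by the length of that product times $\delta$, and use finiteness of $\mathcal{F}$ to choose a uniform bound. The only small difference is that you verify the inverse condition $\|\pi(s^{-1})-\pi(s)^*\|<\epsilon$ directly, whereas the paper omits this because $\mathcal{F}$ is symmetric and so that condition follows automatically from multiplicativity, as noted right after the definition of an $(\mathcal{F},\epsilon)$-representation.
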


This lemma obviously implies
Proposition~\ref{prop:existence-of-quasi}.

\begin{proof}
  We only need to check that $\pi$ is $(\mathcal{F},
  \epsilon)$-multiplicative.  Assume that Eq.~(\ref{eq:8}) holds for some
  $\delta$ in place of $\delta(\epsilon)$.

  Because $\hat{\pi}$ is a homomorphism, for all $\gamma, \gamma'\in
  \Gamma_g$ we have
  \begin{displaymath}
    \norm{\pi(\gamma)\pi(\gamma') - \pi(\gamma\gamma')} =
    \norm{\pi(\gamma)\pi(\gamma')\pi(\gamma\gamma')^* -
      1}
    = \big\|
    \hat{\pi}\big( s(\gamma)s(\gamma')
    s(\gamma\gamma')^{-1} \big) - 1
    \big\|.
  \end{displaymath}
  Now, $s(\gamma)s(\gamma')s(\gamma\gamma')^{-1}$ is in the kernel of
  $q$ and is therefore a product of the form
  \begin{displaymath}
    \prod_{i=1}^m\hat{\gamma}_i\hat{\kappa}_{g}^{\epsilon_i}\hat{\gamma}_i^{-1}
  \end{displaymath}
  where $m$ depends on $\gamma$ and $\gamma'$ and $\epsilon_i\in \{1,
  -1\}$.  Thus
  \begin{align*}  
    \norm{\pi(\gamma)\pi(\gamma') - \pi(\gamma\gamma')}
    &= \bigg\|\hat{\pi}\bigg(
    \prod_{i=1}^m\hat{\gamma}_i\hat{\kappa}_{g}^{\epsilon_i}\hat{\gamma}_i^{-1}
    \bigg)
    - 1\bigg\|\\
    &\leq \sum_{i=1}^m
    \norm{
      \hat{\pi}(\hat{\gamma}_i)
      \hat{\pi}(\hat{\kappa}_g)^{\epsilon_i}
      \hat{\pi}(\hat{\gamma}_i)^* - 1
    }\\
    & \leq m \bigg\| \prod_{i=1}^g [u_i,v_i] - 1 \bigg\|
    < m\delta.
  \end{align*}

  Since $\mathcal{F}$ is a finite set, there is a positive integer $M$
  such that if $\gamma, \gamma'\in \mathcal{F}$, then
  $s(\gamma)s(\gamma')s(\gamma\gamma')^{-1}$ is a product of at most
  $M$ elements of the form
  $\hat{\gamma}_i\hat{\kappa}_{g}^{\epsilon_i}\hat{\gamma}_i^{-1}$ as
  above.  It follows that $\pi$ is an $(\mathcal{F},
  M\delta)$-representation.  Choose $\delta(\epsilon) = \epsilon/M$.
\end{proof}

\begin{notation}
  \label{not:def-of-pi_0}
  Recall the set $\mathcal{F}_k$ defined in
  Notation~\ref{not:subset-F}. Let $s_0\colon \Gamma_g\to \F_{2g}$ be
  a set-theoretic section of $q$ such that
  \begin{gather*}
    s_0(\alpha_k^{\pm 1}) = \hat{\alpha}_k^{\pm 1},\quad
    s_0(\beta_k^{\pm 1}) = \hat{\beta}_k^{\pm 1},\quad
    s_0(\kappa_{k-1}) = \hat{\kappa}_{k-1},\\
    s_0(\kappa_{k-1}\alpha_k) = \hat{\kappa}_{k-1}\hat{\alpha}_k,\quad
    s_0(\kappa_{k-1}\alpha_k\beta_k)
    = \hat{\kappa}_{k-1}\hat{\alpha}_k\hat{\beta}_k
  \end{gather*}
  for all $k\in \{1, \dots, g\}$, and
  \begin{gather*}
    s_0(\kappa_{k-1}\alpha_k\beta_k\alpha_{k}^{-1})
    =     
    \hat{\kappa}_{k-1}\hat{\alpha}_k\hat{\beta}_k\hat{\alpha}_{k}^{-1}
  \end{gather*}
  for all $k\in \{1, \dots, g-1\}$.  That such a section exists
  follows from the fact that all the words in the list
  $\mathcal{F}_1\cup \cdots \cup \mathcal{F}_g\cup \{\alpha_1,
  \beta_1, \dots \alpha_g, \beta_g\}$ are distinct, with two
  exceptions: $\alpha_1 = \kappa_0 \alpha_1\in \mathcal{F}_1$ appears
  twice, as does $\beta_g =
  \kappa_{g-1}\alpha_g\beta_g\alpha_{g}^{-1}\in \mathcal{F}_g$.

  Define $\pi_0 = \hat{\pi}\circ s_0\colon \Gamma_g\to U(A)$.
\end{notation}

\begin{lemma}
  \label{lem:pi_0-mult-on-triangles}
  If $\langle x_i, x_j, x_k \rangle$ is any 2-simplex in $\Lambda_g$
  different from $\langle v_1^g, a_2^g, w_1^g \rangle$, then
  $\pi_0(s_{ik}) = \pi_0(s_{ij}) \pi_0(s_{jk})$.

  If $\langle x_i, x_j, x_k \rangle = \langle v_1^g, a_2^g, w_1^g
  \rangle$, then $\pi_0(s_{ik}) = v_g$ and
  \begin{displaymath}
    \pi_0(s_{ij}) \pi_0(s_{jk})
    = \bigg( \prod_{i=1}^g [u_i,v_i] \bigg) v_g.
  \end{displaymath}
\end{lemma}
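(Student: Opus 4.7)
The strategy is to reduce multiplicativity of $\pi_0$ on each triangle to a word-identity in the free group $\F_{2g}$. Since $\pi_0 = \hat\pi \circ s_0$ and $\hat\pi$ is a genuine homomorphism, the equality $\pi_0(s_{ij})\pi_0(s_{jk}) = \pi_0(s_{ik})$ holds whenever $s_0(s_{ij})\, s_0(s_{jk}) = s_0(s_{ik})$ as words in $\F_{2g}$. If the two words differ, their difference lies in $\ker q$, hence is a product of conjugates of $\hat\kappa_g^{\pm 1}$, whose $\hat\pi$-image is an explicit product of conjugates of $\bigl(\prod_i [u_i,v_i]\bigr)^{\pm 1}$.

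The first step is to enumerate the 2-simplices in $\widetilde\Lambda_g$ from Figure~\ref{fig:k-copy}(\textsc{a}) and read off their edge labels using Lemma~\ref{lem:edge-labels}. Most triangles have at least two edges labeled by the trivial element of $\Gamma_g$ (for instance, those entirely among the inner $w$-vertices, or those using a subdivision edge such as $a_1^k a_2^k$ or $b_1^k b_2^k$), and for these the check is immediate. The remaining triangles are incident to some corner vertex $v_i^k$, and all three of their edge labels lie in $\mathcal{F}_k$ (Notation~\ref{not:subset-F}); the values prescribed by $s_0$ in Notation~\ref{not:def-of-pi_0} were chosen precisely so that the literal product in $\F_{2g}$ matches.

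The unique exceptional triangle is $\langle v_1^g, a_2^g, w_1^g \rangle$. Reading its labels via Lemma~\ref{lem:edge-labels} gives $s_{v_1^g a_2^g} = \kappa_{g-1}\alpha_g\beta_g$, $s_{a_2^g w_1^g} = \alpha_g^{-1}$, and $s_{v_1^g w_1^g} = \beta_g$. The subtlety is that $v_1^g$ naturally carries the element $\kappa_{g-1}\alpha_g\beta_g\alpha_g^{-1}$, but since $\kappa_g = 1$ in $\Gamma_g$ this equals $\beta_g$, and Notation~\ref{not:def-of-pi_0} is forced to choose the shorter representative $s_0(\beta_g) = \hat\beta_g$. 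Consequently
\begin{displaymath}
  s_0(s_{v_1^g a_2^g})\, s_0(s_{a_2^g w_1^g})
  = \hat\kappa_{g-1}\hat\alpha_g\hat\beta_g\hat\alpha_g^{-1}
  = \hat\kappa_g\hat\beta_g \quad \text{in } \F_{2g},
\end{displaymath}
while $s_0(s_{v_1^g w_1^g}) = \hat\beta_g$. Applying $\hat\pi$ yields $\pi_0(s_{ij})\pi_0(s_{jk}) = \bigl(\prod_{i=1}^g [u_i,v_i]\bigr) v_g$ and $\pi_0(s_{ik}) = v_g$, which is the asserted formula.

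The main obstacle is the bookkeeping involved in verifying the word-identity for every 2-simplex of the (rather involved) triangulation $\Lambda_g$. The key observation that keeps this under control is that every nontrivial edge label of $\Lambda_g$ lies in some $\mathcal{F}_k$, and the only element of $\bigcup_k \mathcal{F}_k$ for which two genuinely different $\F_{2g}$-representatives are unavoidable is $\beta_g = \kappa_{g-1}\alpha_g\beta_g\alpha_g^{-1}$; the triangle $\langle v_1^g, a_2^g, w_1^g\rangle$ is the unique 2-simplex in which this ambiguity is activated.
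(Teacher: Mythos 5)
Your argument is correct and follows the same route as the paper: reduce multiplicativity of $\pi_0 = \hat\pi\circ s_0$ to a word-identity $s_0(s_{ij})\,s_0(s_{jk}) = s_0(s_{ik})$ in $\F_{2g}$, verify it triangle-by-triangle using the edge labels from Lemma~\ref{lem:edge-labels} and the prescribed values of $s_0$, and isolate $\langle v_1^g, a_2^g, w_1^g\rangle$ as the unique simplex where the relation $\kappa_{g-1}\alpha_g\beta_g\alpha_g^{-1} = \beta_g$ forces a discrepancy, yielding the extra factor $\hat\kappa_g$. Your closing remark pinpointing $\beta_g$ as the only element of $\bigcup_k\mathcal{F}_k$ with two unavoidable $\F_{2g}$-representatives is a nice way to phrase the paper's observation that $s_0$ is ``literal'' on $\mathcal{F}_k$ except at that one word.
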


\begin{proof}
  The definition of $s_0$ implies that the image under $s_0$ of any
  ``word'' in the list $\mathcal{F}_k$ is the word obtained by
  replacing $\alpha_k^{\pm 1}$ by $\hat{\alpha}_k^{\pm 1}$ and
  $\beta_k^{\pm 1}$ by $\hat{\beta}_k^{\pm 1}$, with one exception:
  the image of $\kappa_{g-1}\alpha_g\beta_g\alpha_{g}^{-1} = \beta_g$
  under $s_0$ is $\hat{\beta}_g$.

  This observation along with inspection of
  Figure~\ref{fig:k-copy}\textsc{a} shows that $s_0(s_{ik}) =
  s_0(s_{ij}) s_0(s_{jk})$ for every 2-simplex in $\Lambda_g$
  different from $\langle v_1^g, a_2^g, w_1^g \rangle$.  For instance,
  let $l\in \{ 1, \dots, g\}$ and consider the simplex
  \begin{displaymath}
    \langle v_0^l, a_1^l, w_0^l \rangle  = \langle x_i, x_j, x_k \rangle.
  \end{displaymath}
  The corresponding group elements are 
  \begin{align*}
    s_{ij} &= \kappa_{l-1} \alpha_l\\
    s_{jk} &= \alpha_l^{-1}, \quad\text{and }\\
    s_{ik} &= \kappa_{l-1}.
  \end{align*}
  Then
  \begin{displaymath}
    s_0(s_{ik}) = \hat{\kappa}_{l-1}
    = \hat{\kappa}_{l-1}\hat{\alpha}_l \cdot \hat{\alpha}_l^{-1}
    = s_0(\kappa_{l-1}\alpha_l) \cdot s_0(\alpha_l^{-1})
    = s_0(s_{ij})\cdot s_0(s_{jk}).
  \end{displaymath}
  The computations in all other 2-simplices but $\langle v_1^g, a_2^g,
  w_1^g \rangle$ are very similar.  For this exceptional simplex we
  get
  \begin{displaymath}
    s_0( s_{ik} ) = s_0( \kappa_{g-1}\alpha_g\beta_g\alpha_{g}^{-1} )
    = s_0( \beta_g ) = \hat{\beta}_g
  \end{displaymath}
  but
  \begin{displaymath}
    s_0( s_{ij} ) s_0( s_{jk} )
    = s_0 ( \kappa_{g-1}\alpha_g\beta_g ) s_0( \alpha_g^{-1} )
    = \hat{\kappa}_{g-1}\hat{\alpha}_g\hat{\beta}_g \hat{\alpha}_g^{-1}
    = \hat{\kappa}_g \hat{\beta}_g
  \end{displaymath}
  Since $\pi_0 = \hat{\pi} \circ s_0$ and $\hat{\pi}$ is a
  homomorphism, the lemma follows.
\end{proof}

Recall that we used Theorem~\ref{thm:index-theorem} to define $\omega
> 0$ in Section~\ref{subsec:push-forward-line}.

\begin{lemma}
  \label{lem:pi_0}
  If $0 < \epsilon < \omega$ and Eq.~(\ref{eq:8}) holds (so that $\pi_0$
  is an $(\mathcal{F}, \epsilon)$-representation), then
  \begin{displaymath}
    \tau \big( \pi_{0\, \sharp}(\mu[\Sigma_g]) \big)
    = \frac{1}{2\pi i} \tau\bigg(
      \log\bigg( \prod_{i=1}^g [u_i,v_i] \bigg)
      \bigg).
  \end{displaymath}
\end{lemma}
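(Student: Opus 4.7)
The plan is to reduce everything to Theorem~\ref{thm:chern-class} applied to the specific quasi-representation $\pi_0$, whose defect from being a homomorphism is concentrated in a single simplex by Lemma~\ref{lem:pi_0-mult-on-triangles}. Writing $K = \prod_{i=1}^g [u_i, v_i]$, I would assume $\delta(\epsilon)$ has been chosen small enough that $\|K - 1\| < 1$ (in addition to making $\pi_0$ an $(\mathcal{F},\epsilon)$-representation with $\epsilon<\omega$), so that all invocations of Lemma~\ref{lem:de-la-harpe}(2) and all inverses below are defined.

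By Theorem~\ref{thm:chern-class},
\[
\tau\bigl(\pi_{0,\sharp}(\mu[\Sigma_g])\bigr)
= \sum_\sigma (-1)^{s(\sigma)} \widetilde{\Delta}_\tau(\xi_\sigma).
\]
For every $2$-simplex $\sigma = \langle x_i, x_j, x_k\rangle$ of $\Lambda_g$ other than $\sigma_0 := \langle v_1^g, a_2^g, w_1^g\rangle$, Lemma~\ref{lem:pi_0-mult-on-triangles} yields $\pi_0(s_{ik}) = \pi_0(s_{ij})\pi_0(s_{jk})$, so $\xi_\sigma$ is constant and $\widetilde{\Delta}_\tau(\xi_\sigma) = 0$. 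Only the exceptional term survives.

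For $\sigma_0$, Lemma~\ref{lem:pi_0-mult-on-triangles} gives $\pi_0(s_{ik}) = v_g$ and $\pi_0(s_{ij})\pi_0(s_{jk}) = K v_g$, so
\[
\xi_{\sigma_0}(t) = \bigl((1-t) + tK\bigr) v_g = \eta(t) v_g, \qquad \eta(t) := 1 + t(K-1).
\]
Since $v_g$ is constant, a direct computation shows $\xi_{\sigma_0}'(t)\xi_{\sigma_0}(t)^{-1} = \eta'(t)\eta(t)^{-1}$, so by definition $\widetilde{\Delta}_\tau(\xi_{\sigma_0}) = \widetilde{\Delta}_\tau(\eta)$. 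The path $\eta$ starts at $1$, stays within distance $\|K-1\|<1$ from $1$, hence lies in $\GL^0(A)$; Lemma~\ref{lem:de-la-harpe}(2) then gives
\[
2\pi i\,\widetilde{\Delta}_\tau(\eta) = \tau(\log \eta(1)) - \tau(\log \eta(0)) = \tau(\log K).
\]

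The remaining point—which I expect to be the main obstacle—is to verify that $s(\sigma_0) = 0$, i.e.\ that the orientation of $\partial U_{v_1^g}$ inherited from $[\Sigma_g]$ traverses the segment $U_{v_1^g w_1^g}^{\sigma_0}$ starting from the barycenter of $\sigma_0$. This is a direct inspection of Figure~\ref{fig:k-copy}\textsc{a} for the $k=g$ wedge: the vertex order $v_1^g < a_2^g < w_1^g$ together with the standard orientation of $\Sigma_g$ fixes the direction of $\partial U_{v_1^g}$ along the edge $v_1^g w_1^g$, and the barycenter of $\sigma_0$ is indeed the initial endpoint of that subsegment. Assuming this sign verification, we conclude
\[
\tau\bigl(\pi_{0,\sharp}(\mu[\Sigma_g])\bigr) = \widetilde{\Delta}_\tau(\xi_{\sigma_0}) = \frac{1}{2\pi i}\,\tau\!\left(\log\!\prod_{i=1}^g [u_i,v_i]\right),
\]
which is the claimed formula.
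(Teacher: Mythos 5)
Your argument follows the paper's proof step for step: invoke Theorem~\ref{thm:chern-class}, observe via Lemma~\ref{lem:pi_0-mult-on-triangles} that only the path over $\sigma_0 = \langle v_1^g, a_2^g, w_1^g\rangle$ is non-constant, factor out the constant $v_g$, and evaluate $\widetilde{\Delta}_\tau$ of the remaining path $\eta$ using Lemma~\ref{lem:de-la-harpe}. The one (superficial) discrepancy is the sign: the paper records $s(\sigma_0)=1$ while you conclude $s(\sigma_0)=0$; but since the paper's own displayed formula $\tau(\pi_{0\,\sharp}(\mu[\Sigma_g])) = (-1)^{s(\sigma_0)}\frac{1}{2\pi i}\tau(\log\prod[u_i,v_i])$ with $s(\sigma_0)=1$ would contradict the statement of the lemma, there is evidently a minor sign-bookkeeping slip in the paper (traceable to the passage from $-\tfrac{1}{2\pi i}\int_M\tau(\Omega)$ to the signed sum in Theorem~\ref{thm:chern-class}), and your accounting is at least internally consistent with the claimed conclusion.
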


\begin{proof}
  We apply Theorem~\ref{thm:chern-class}.  For each simplex $\langle
  x_i, x_j, x_k\rangle$ we compute $\widetilde{\Delta}_\tau(\xi)$
  where $\xi_\sigma$ is the path
  \begin{displaymath}
    \xi_\sigma(t) = (1-t)\pi(s_{ik}) + t\pi(s_{ij})\pi(s_{jk}),\quad
    t\in[0,1].
  \end{displaymath}

  Observe that the value of $\widetilde{\Delta}_\tau$ on a constant
  path is 0.  Lemma~\ref{lem:pi_0-mult-on-triangles} implies that
  there is only one 2-simplex $\sigma$ such that $\xi_\sigma$ is not
  constant: $\sigma_0 = \langle v_1^g, a_2^g, w_1^g \rangle$.  By
  Lemma~\ref{lem:pi_0-mult-on-triangles} it yields the linear path
  $\xi_{\sigma_0}$ from $v_g$ to
  \begin{displaymath}
    \bigg( \prod_{i=1}^g [u_i,v_i] \bigg) v_g.
  \end{displaymath}
  Using Lemma~\ref{lem:de-la-harpe} we obtain
  \begin{displaymath}
    \widetilde{\Delta}_\tau(\xi_{\sigma_0}) = \frac{1}{2\pi i} \tau\bigg(
      \log\bigg( \prod_{i=1}^g [u_i,v_i] \bigg)
      \bigg).
  \end{displaymath}
  Finally, Theorem~\ref{thm:chern-class} implies
  \begin{displaymath}
    \tau \big( \pi_{0\, \sharp}(\mu[\Sigma_g]) \big)
    = (-1)^{s(\sigma_0)}\frac{1}{2\pi i} \tau\bigg(
    \log\bigg( \prod_{i=1}^g [u_i,v_i] \bigg)
    \bigg)
  \end{displaymath}
  where the sign $(-1)^{s(\sigma_0)}$ depends on the the orientation
  $[\Sigma_g]$.  The standard orientation on $\Sigma_g$ gives
  ${s(\sigma_0)} = 1$.
\end{proof}

By putting these lemmas together we can prove
Theorem~\ref{thm:Mg-formula}.

\begin{proof}[Proof of Theorem~\ref{thm:Mg-formula}]
  Recall that the statement of the theorem fixes a positive integer
  $g$ and idempotents $q_0$ and $q_1$ in some matrix algebra over
  $\ell^1(\Gamma_g)$ such that $\mu[\Sigma_g] = [q_0] - [q_1]\in
  K_0(\ell^1(\Gamma_g))$.

  Let $\mathcal{F}_0$ be the finite set $\{s_{ij}\}$ defined in
  Section~\ref{subsec:s_ij} and described explicitly in
  Notation~\ref{not:subset-F}.  Theorem~\ref{thm:index-theorem}
  provides an $\omega > 0$ so small that if $\pi\colon \Gamma_g\to
  U(A)$ is an $(\mathcal{F}_0, \omega)$-representation, then
  $\pi_\sharp(\mu[\Sigma_g]) := \pi_\sharp(q_0) - \pi_\sharp(q_1)$ is
  defined and
  \begin{displaymath}
    \tau\big( \pi_\sharp(\mu[\Sigma_g]) \big) =
    \langle \ch_\tau (\ell_\pi), [\Sigma_g] \rangle.
  \end{displaymath}

  By setting $u_i := \pi(\alpha_i)$ and $v_i := \pi(\beta_i)$ for all
  $i\in \{1, \dots, g\}$, we see that such a quasi-representation
  $\pi$ may be used to define a quasi-representation $\pi_0$ as in
  Section~\ref{not:def-of-pi_0}.  The more multiplicative $\pi$ is on
  $\mathcal{F}_0$, the smaller the quantity
  \begin{displaymath}
    \biggl\| \prod_{i=1}^g [u_i,v_i] - 1 \biggr\|
  \end{displaymath}
  is.  Lemma~\ref{lem:existence-of-quasi-homs} shows that by making
  this quantity smaller we can make $\pi_0$ more multiplicative on
  $\mathcal{F}_0$.  Therefore, because $\pi$ and $\pi_0$ agree on the
  generators of $\Gamma_g$, there exists an $0 < \epsilon_0 < \omega$
  so small that if $\pi$ is an $(\mathcal{F}_0,
  \epsilon_0)$-representation, then $\pi_\sharp$ and $\pi_{0\,
    \sharp}$ agree on $\{q_0, q_1\}\subset K_0(\ell^1(\Gamma_g))$.

  Finally,
  \begin{displaymath}
    \tau\big( \pi_\sharp(\mu[\Sigma_g]) \big)
    = \tau\big( \pi_{0\, \sharp}(\mu[\Sigma_g]) \big)
    = \frac{1}{2\pi i} \tau\bigg(
    \log\bigg( \prod_{i=1}^g [u_i,v_i] \bigg)
    \bigg)
  \end{displaymath}
  by Lemma~\ref{lem:pi_0}.
\end{proof}

\bibliographystyle{amsalpha}

\providecommand{\bysame}{\leavevmode\hbox to3em{\hrulefill}\thinspace}
\providecommand{\MR}{\relax\ifhmode\unskip\space\fi MR }
\providecommand{\MRhref}[2]{%
  \href{http://www.ams.org/mathscinet-getitem?mr=#1}{#2}
}
\providecommand{\href}[2]{#2}

\end{document}